\documentclass[onecolumn]{IEEEtran}
\IEEEoverridecommandlockouts
\usepackage{relsize}
\usepackage{amsmath,bbm}
\usepackage{amsfonts}
\usepackage{amsmath, amsthm, amssymb}
\usepackage{tikz}
\usepackage{pgf, hyperref, cite}
\usepackage{subfig}
\usepackage{relsize}
\usetikzlibrary{arrows,automata}
\usepackage{algorithm}
\usepackage[noend]{algpseudocode}
\usepackage{comment}
\usepackage{graphicx}

\newtheorem{assumption}{Assumption}
\newcommand{\be}{\begin{equation}}
\newcommand{\ee}{\end{equation}}
\newcommand{\req}[1]{(\ref{#1})}



\usepackage{setspace}


\def\ew#1{{{\color{black}#1}}}
\def\fm#1{{{\color{black}#1}}}
\def\re#1{{{\color{black}#1}}}

\newtheorem{theorem}{Theorem}[section]
\newtheorem{lemma}[theorem]{Lemma}

\newcommand{\norm}[1]{\left|\left|#1\right|\right|}


\newlength{\smpagewidth}
\newlength{\smpageheight}

\setlength{\smpagewidth}{8.5in}
\setlength{\smpageheight}{11in}

\newcommand{\setleftmargin}[1]{
    \addtolength{\textwidth}{\oddsidemargin}
    \addtolength{\textwidth}{1in}
    \addtolength{\textwidth}{-#1}
    \setlength{\oddsidemargin}{-1in}
    \addtolength{\oddsidemargin}{#1}
    \setlength{\evensidemargin}{\oddsidemargin}
}
\newcommand{\setrightmargin}[1]{
    \setlength{\textwidth}{\smpagewidth}
    \addtolength{\textwidth}{-\oddsidemargin}
    \addtolength{\textwidth}{-1in}
    \addtolength{\textwidth}{-#1}
}
\newcommand{\settopmargin}[1]{
    \addtolength{\textheight}{\topmargin}
    \addtolength{\textheight}{1in}
    \addtolength{\textheight}{\headheight}
    \addtolength{\textheight}{\headsep}
    \addtolength{\textheight}{-#1}
    \setlength{\topmargin}{-1in}
    \addtolength{\topmargin}{-\headheight}
    \addtolength{\topmargin}{-\headsep}
    \addtolength{\topmargin}{#1}
}
\newcommand{\setbottommargin}[1]{
    \setlength{\textheight}{\smpageheight}
    \addtolength{\textheight}{-\topmargin}
    \addtolength{\textheight}{-1in}
    \addtolength{\textheight}{-\footskip}
    \addtolength{\textheight}{-#1}
}


\setleftmargin{.71in}
\setrightmargin{.71in}
\settopmargin{.75in}
\setbottommargin{.77in}

\providecommand{\boldsymbol}[1]{\mbox{\boldmath $#1$}}

\title{Superlinearly Convergent Asynchronous Distributed  Network Newton Method}
\author{Fatemeh Mansoori$^\dag$\thanks{$^\dag$Department of Electrical Engineering and Computer Science, Northwestern University} and Ermin Wei$^\dag$}
\date{}

\begin{document}
\maketitle
\begin{abstract}
The problem of minimizing a sum of local convex objective functions over a networked system captures many important applications and has received much attention in the distributed optimization field. Most of existing work focuses on development of fast distributed algorithms under the presence of a central clock. The only known algorithms with convergence guarantees for this problem in asynchronous setup could achieve either sublinear rate under totally asynchronous setting or linear rate under partially asynchronous setting (with bounded delay). In this work, we built upon existing literature to develop and analyze an asynchronous Newton based approach for solving a penalized version of the problem. We show that this algorithm converges almost surely \ew{with global linear rate and local superlinear rate in expectation.} Numerical studies confirm superior performance against other existing asynchronous methods. 
\end{abstract}
\section{Introduction}\label{sec:intro}
Large scale networks and datasets, coming from applications such as Internet, wireless sensor networks, robotic networks and
large scale machine learning problems, are an integral part of modern technology. One main characteristic of these systems is the lack of centralized access to information due to either communication overhead or the large scale of the network. Therefore, control and optimization algorithms deployed in such networks, should be distributed, relying only on local information and processing \cite{ts86, bo11,bot16,no4, aliLinMorse, we10,mo15}. Distributed optimization algorithms decompose the problem into smaller sub problems that are solved in parallel. These algorithms can run either synchronously or asynchronously. In a synchronous distributed iterative algorithm, the agents will need to have access to a central coordinator/clock and must wait for the slowest to finish  before proceeding to the next iteration. Asynchronous implementations removes these requirements. In asynchronous settings, the agents become active randomly in time and update using partial and local information. In this paper, we will focus on develop\fm{ing} an asynchronous method for solving a penalized  version of the problem of minimizing summation of local convex objective functions.

Various algorithms have been introduced to solve optimization problems in an asynchronous distributed way including primal algorithms \cite{ra9, ra10,du12,bo6,ne11,sr11}, primal-dual algorithms \cite{we13,cha16,ch16,pe16,bi15} and quasi-Newton algorithms \cite{ei16}. While some of these methods have convergence rate guarantees and/or captures more general problem formulation, to the best of our knowledge, there is no asynchronous distributed optimization algorithm with guaranteed superlinear rate of convergence. In particular, the asynchronous gossip algorithm in \cite{ra9} converges almost surely for both convex and non-convex objective functions. In \cite{we13} a general formulation for asynchronous distributed ADMM algorithm has been proposed which converges to the optimum at a rate of $O(\frac{1}{k})$. In \cite{cha16}, \cite{ch16}, the linear convergence rate for an ADMM based method has been guaranteed under the assumption of bounded delays and the specific structure of the objective function. In \cite{pe16} an algorithmic framework has been introduced to find a fix point of a non-expansive operator, for which almost surely convergence and the linear rate have been proven. The recent work in \cite{ei16}, shows the linear convergence rate for a quasi Newton method under the partial asynchrony assumption.  Our contribution is to introduce a totally asynchronous (with arbitrary delay) network Newton algorithm which converges almost surely and  achieves in expectation a quadratic rate of convergence. 

To achieve superlinear rate, we build our algorithm on Newton's method and existing literature on distributed Newton method \cite{we10, mo15,jadbabaie2009distributed}. More specifically, the challenge in designing distributed Newton related method lies in the step to compute an inverse Hessian related quantity. The matrix inversion step can be expensive or impossible to carry out in a distributed way. Therefore, we adopt a common technique to replace Hessian inverse with an approximation \cite{gi5,sh14,bertsekas1983projected}. Our paper is closely related to \cite{mo15,mok15}, where the authors developed Newton based distributed synchronous method for the same penalized problem. Our work is motivated by their approach to approximate Hessian inverse and adapted to the asynchronous setting. In \cite{mo15, mok15}, the authors showed that the synchronous version of network Newton algorithm goes through a quadratic convergence phase. The main difference of our approach lies in the novel asynchronous implementation, which requires very different tools of analysis. Moreover, we present a different stepsize selection criteria which unlike the existing stepsize is independent of the optimal function value that is hard to compute.

The rest of this paper is organized as follows: Section II describes the problem formulation. Section III presents the asynchronous network Newton algorithm. Section IV contains the convergence analysis. Section V presents the simulation results that show the convergence speed improvement of our algorithm compared to the existing methods. Section VI contains the concluding remarks. \\
\noindent\textbf{Basic Notation and Notions:}
A vector is viewed as a column vector. For a matrix $A$, we write $A_{ij}$ to denote the component of $i^{th}$ row and $j^{th}$ column.
For a vector $x$, $x_i$
denotes the $i^{th}$ component of the vector.
We use $x'$ and $A'$ to
denote the transpose of a vector $x$ and a matrix $A$ respectively.
 We use standard Euclidean norm (i.e., 2-norm) unless otherwise noted, i.e., for a vector $x$ in $\mathbb{R}^n$, $\norm{x}=\left(\sum_{i=1}^n x_i^2\right)^{\frac{1}{2}}$.  The notation $\mathbbm{1}$ represents the vector of all $1's$. For a real-valued function $f:\mathbb{R}\rightarrow \mathbb{R}$, the gradient vector and the Hessian
 matrix of $f$ at $x$  are denoted by $\nabla f(x)$ and
 $\nabla^2 f({x})$ respectively.

\section{Problem Formulation}\label{sec:model}
We consider the setup where $n$ agents are connected by an undirected static network and the system wide goal is to collectively solve the following optimization problem:
\begin{equation}
\begin{aligned}
 \min_x\quad \frac{1}{2}x^{T}(I-W)x+\alpha\sum_{i=1}^{n}f_{i}(x_{i})\,,\label{optFormulation}
\end{aligned}
\end{equation}
\noindent where $x:=[x_{1},\,x_{2},\,...,\,x_{n}]^\prime\in\mathbb{R}^{n}$,  \footnote{For representation simplicity, we focus on the case where $x_i$ is in $\mathbb{R}$. Our results in this paper can be easily generalized to higher dimensional case.} matrix $I$ is the identity matrix of size $n$ by $n$, $\alpha>0$ is a positive scalar,  and \ew{the consensus matrix} $W\in\mathbb{\mathbb{R}}^{n\times n}$ is a symmetric nonnegative matrix with the
following properties:
\begin{equation*}
\begin{aligned}
W'& =W,\quad & W\mathbbm{1}&=\mathbbm{1},\quad
\mbox{null}\{I-W\}=\mbox{span}\{1\},\quad & 0\leq& W_{ij}
<1\,.\label{eq:3}
\end{aligned}
\end{equation*}  
Moreover, matrix $W$ represents the network topology, where $W_{ij}\neq 0$ if and only if agents $i$ and $j$ are connected in the underlying network topology. Each function $f_{i}:\mathbb{R}\rightarrow\mathbb{R}$ is twice differentiable and convex. We denote by $\mathcal{N}_{i}$ the set of neighbors of agent $i$ in the underlying network and  $F: \mathbb{R}^n\to\mathbb{R}$ to be the objective function, i.e., 
\be\label{eq:defF}F(x) = \frac{1}{2}x^{T}(I-W)x+\alpha\sum_{i=1}^{n}f_{i}(x_{i}). \ee In this distributed setting, each agent $i\in\{1, 2, ...,n\}$ has access to its local cost function $f_i$, a local decision variable $x_{i}\in\mathbb{R}$ and local positive weights $W_{ij}$ for $j$ in $\mathcal{N}_i$.
This setup models the problems where each agent has a component of the system objective function and aims to
minimize its local cost function, while keeping its variable equal
to those of neighboring agents. 

We study problem \req{optFormulation}, since it can be viewed as a penalized version of a distributed optimization problem, where the objective function is a sum of local convex
cost functions, i.e., 
\begin{equation}
\begin{aligned}
\min_{x}&\quad\sum_{i=1}^{n}f_{i}(x_{i})\,,\\ \mbox{s.t.}&\quad x_{i}=x_{j}\,\,\,\forall\,i,\,j\,\in\mathcal{N}_{i}\,.\label{consensusFormulation}
\end{aligned}
\end{equation}
The term $\frac{1}{2}x^{T}(I-W)x$ in problem \req{optFormulation} corresponds to the penalty on constraint violation in problem \req{consensusFormulation}, since any feasible solution $x = [x_i]_i$ satisfies $Wx = Ix$. The scalar $\alpha$ reflects the weight of objective function relative to penalty on constraint violation. We observe that by varying the value of $\alpha$, we can obtain optimal  solution of \eqref{consensusFormulation}. We here focus on solving \eqref{optFormulation} for one particular value of $\alpha$.
Formulation \req{consensusFormulation} is in turn the distributed formulation of the centralized problem of $\min_x \sum_{i=1}^n f_i(x)$, which is used widely in machine learning, signal processing and sensor networks applications \cite{we13, ch16, mota2013d,bo11}. We will adopt the following assumptions on problem \eqref{optFormulation}.
\begin{assumption}[Bounded Hessian]\label{assm:BoundedHessian} The local objective functions $f_{i}(x)$ are convex,
	twice continuously differentiable with bounded Hessian, i.e. for all $x_i$ in $\mathbb{R}$\[
	0<m\leq\nabla^{2}f_{i}(x_i)\leq M<\infty.\]
\end{assumption}
\begin{assumption} [Lipschitz Hessian]\label{assm:LipHessian} The local objective function Hessian matrices, $\nabla^{2}f_{i}(x_i)$, are L-Lipschitz continuous with respect to the Euclidean norm, i.e., for all $x_i,\,\bar{x}_i$ in $\mathbb{R}$,
\[	\big\Vert \nabla^{2}f_{i}(x_i)-\nabla^{2}f_{i}(\bar{x}_i)\big\Vert \leq L\big\Vert x_i-\bar{x}_i\big\Vert.\]
\end{assumption}
\begin{assumption} [Bounded Consensus Matrix Weight]\label{assm:Consensus} There exist positive scalars $\delta$ and $\Delta$ with $0<\delta\leq \Delta<1$, such that the diagonal elements of the consensus matrix $W$ satisfy
\[	\delta\leq W_{ii}\leq\Delta,\,\,\,\,i=1,\,2,\,...,\,n\,.\]\end{assumption}
The first two assumptions are standard conditions on the local objective functions for developing Newton based algorithms \cite{boydbook}. The first one requires that the eigenvalues of the Hessian matrix are bounded, which implies that the objective functions are $m-$strongly convex. The second assumption states that the Hessian does not change too fast. Both of these assumptions are satisfied by quadratic objective functions. The last assumption is on the matrix $W$, which is satisfied by many standard choices of consensus matrices \cite{nedic2009distributed,ts86,xiao2005scheme}. 
 
For the rest of the paper, we will assume these assumptions hold and our goal is to develop an asynchronous distributed Newton based method to solve problem \req{optFormulation}, which can achieve superlinear rate of convergence.

\section{Asynchronous Network Newton Method}\label{sec:alg}

Our asynchronous method is based on Newton's algorithm for unconstrained problem, which is an iterative algorithm of the following form
\[
x(t+1)=x(t)+\varepsilon d(t),
\]
where $\varepsilon$ is some positive stepsize and $d(t)$ is the Newton direction. The notation $(t)$ indicates the value of the variable at $t^{th}$ iteration. To compute Newton direction, we use $g$ and $H$ to denote the gradient and Hessian of objective function respectively, i.e., $g(t)=\nabla F(x(t))$ and $H(t)=\nabla^{2}F(x(t))$. Then we have Newton direction is given by $d(t)=-H(t)^{-1}g(t)$. 

We use definition of function F [c.f.\ \req{eq:defF}] and have that each component of gradient $g$ is given by
\be\label{eq:gradient} g_i(t)=[(I-W)x(t)]_i+\alpha \nabla f_{i}(x_{i}(t)).\ee The Hessian matrix $H$ can be expressed as
\be \label{eq:defH} H(t)=I-W+\alpha G(t),\ee
 where $G(t)\in\mathbb{R}^{n\times n}$ is a diagonal matrix
with 
\be\label{eq:defG}G_{ii}(t)=\nabla^{2}f_{i}(x_{i}(t)).\ee The main difficulty to develop distributed Newton based methods is to compute Newton direction, which involves Hessian inverse and can not be computed in a distributed way directly. Our asynchronous method adopts the matrix splitting technique in existing literature \cite{cottle1992linear,saad2003iterative,we10, mo15} to compute Newton direction in an distributed way. 

\subsection{Background on Approximation of Newton Direction}
We first summarize the technique used in \cite{mo15} to solve the same problem in a synchronous distributed way, we will then introduce our asynchronous implementation of this algorithm. The main idea is to represent the Hessian inverse as a convergent series of matrices, where each of the term can be computed using local information. The algorithm will then take a finite truncated summation of the terms to approximate the inverse of Hessian matrix. 

We first split the Hessian matrix $H$ [c.f.\ \eqref{eq:defH}] as the following,
\be\label{eq:Hsplit}
H(t)=D(t)-B,\ee
with
\be
D(t)=\alpha G(t)+2(I-W_{d}),
\quad B=I-2W_{d}+W,\label{eq:defDB}\ee
where  $W_{d}$ is a diagonal matrix with $[W_d]_{ii} = W_{ii}$. 
By definition of matrix G [c.f.\ Eq.\ \eqref{eq:defG}] and the assumption that the local functions have bounded second derivative [c.f.\ Assumption \ref{assm:BoundedHessian}], $G(t)$ is a positive definite matrix. By Assumption \ref{assm:Consensus}, we have $[W_d]_{ii}=[W]_{ii}<1$ and thus $I-W_{d}$ is also positive definite. Therefore, the diagonal matrix $D(t)$ is positive definite and thus inVertible. By factoring $D(t)^{1/2}$ on both sides of Eq.\ \eqref{eq:Hsplit}, we can write $H(t)$ as
\[ H(t) = 
D(t)^{1/2}\left(I-D(t)^{-1/2}BD(t)^{-1/2}\right)
D(t)^{1/2},\] which implies that 
\[ H(t)^{-1} = 
D(t)^{-1/2}(I-D(t)^{-1/2}BD(t)^{-1/2})^{-1}
D(t)^{-1/2},\]

We can write the middle inverse term 
\begin{align*}\left(I\right.\left.-D(t)^{-1/2}BD(t)^{-1/2}\right)^{-1} = \sum_{k=0}^\infty \left(D(t)^{-1/2}BD(t)^{-1/2}\right)^k,\end{align*} whenever spectral radius (largest eigenvalue by magnitude) of matrix $D(t)^{-1/2}BD(t)^{-1/2}$ is strictly less than 1 \cite{horn1985matrix}. The particular structure of matrices D and B were chosen, because the following lemma from \cite{mo15} guarantees that the spectral radius of matrix $D(t)^{-1/2}BD(t)^{-1/2}$ is strictly less than 1 for our problem. 
\begin{lemma}  \label{proposition 2} Under Assumptions \ew{\ref{assm:BoundedHessian} and \ref{assm:Consensus}}, $D(t)^{-1/2}BD^{-1/2}$
	is positive semi-definite
	with bounded eigenvalues, i.e., 
	\begin{equation}
	\begin{aligned}
	\boldsymbol{0}\preceq D(t)^{-1/2}BD(t)^{-1/2}\preceq\rho I\,,\label{eq:26}
	\end{aligned}
	\end{equation}
 where $\rho:=2(1-\delta)/(2(1-\delta)+\alpha m)<1$.
\end{lemma}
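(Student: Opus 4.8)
The plan is to prove the two-sided spectral bound by passing to the congruent pair $B$ and $D(t)$. Since $D(t)$ is diagonal and positive definite, $D(t)^{-1/2}$ is symmetric and invertible, so for any scalar $c$ the ordering $D(t)^{-1/2}BD(t)^{-1/2}\preceq cI$ is equivalent, after the substitution $v=D(t)^{-1/2}w$, to the quadratic-form inequality $v'Bv\le c\,v'Dv$ for every $v\in\mathbb{R}^{n}$. Thus I would reduce the whole lemma to establishing $0\le v'Bv$ (lower bound, $c=0$) and $v'Bv\le\rho\,v'Dv$ (upper bound, $c=\rho$). The first step is to read off the entries of $B$ from \eqref{eq:defDB}: using $[W_d]_{ii}=W_{ii}$ gives $B_{ii}=1-W_{ii}$ and $B_{ij}=W_{ij}$ for $i\ne j$, and since $W$ is row-stochastic, $1-W_{ii}=\sum_{j\ne i}W_{ij}$.

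For the lower bound I would substitute these entries into $v'Bv$ and reorganize the diagonal contribution using $1-W_{ii}=\sum_{j\ne i}W_{ij}$ together with the symmetry $W_{ij}=W_{ji}$. A short computation collapses everything into a single sum of squares,
\[ v'Bv=\tfrac12\sum_{i\ne j}W_{ij}(v_i+v_j)^2, \]
which is manifestly nonnegative because $W_{ij}\ge0$; this yields $B\succeq0$ and hence the left inequality in \eqref{eq:26}. (Equivalently, one could note that $B$ is symmetric and weakly diagonally dominant with nonnegative diagonal.)

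For the upper bound I would start from the same identity and apply $(v_i+v_j)^2\le2(v_i^2+v_j^2)$ to get $v'Bv\le S$, where $S:=\sum_{i\ne j}W_{ij}(v_i^2+v_j^2)=2\sum_i(1-W_{ii})v_i^2$. On the other side, reading the diagonal of $D(t)$ from \eqref{eq:defDB} gives $v'Dv=\alpha\sum_iG_{ii}v_i^2+S$, so the very same quantity $S$ produced by the penalty block $2(I-W_d)$ appears in both forms; this exact matching is, to my mind, the crux of the argument, since it is what lets the curvature upper bound $M$ drop out entirely. It then remains to control $v'Bv/v'Dv\le S/\bigl(S+\alpha\sum_iG_{ii}v_i^2\bigr)$. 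Using $G_{ii}\ge m$ (Assumption \ref{assm:BoundedHessian}) and $W_{ii}\ge\delta$ (Assumption \ref{assm:Consensus}) I would bound $\alpha\sum_iG_{ii}v_i^2\ge\alpha m\|v\|^2$ and $S\le2(1-\delta)\|v\|^2$, and then invoke that $s\mapsto s/(s+t)$ is increasing in $s\ge0$ and decreasing in $t>0$ to conclude
\[ \frac{v'Bv}{v'Dv}\le\frac{2(1-\delta)\|v\|^2}{2(1-\delta)\|v\|^2+\alpha m\|v\|^2}=\frac{2(1-\delta)}{2(1-\delta)+\alpha m}=\rho, \]
which is exactly $B\preceq\rho D$ and hence the right inequality in \eqref{eq:26}.

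The step I expect to be delicate is precisely the avoidance of the tempting but lossy route of estimating $H(t)\succeq\alpha mI$ together with $\lambda_{\max}(D(t))\le\alpha M+2(1-\delta)$: that would only give the weaker constant $\alpha m/(\alpha M+2(1-\delta))$ for $\lambda_{\min}(D^{-1/2}HD^{-1/2})$, and hence an upper bound on the spectral radius strictly larger than $\rho$. One must therefore work at the level of the Rayleigh quotient, so that the off-diagonal penalty terms in $B$ and in $D$ cancel through the common term $S$ and the dependence on $M$ disappears, leaving the clean constant $\rho=2(1-\delta)/(2(1-\delta)+\alpha m)<1$.
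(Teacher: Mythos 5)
Your proof is correct. Note that the paper itself does not prove Lemma \ref{proposition 2}; it is imported verbatim from \cite{mo15}, so there is no in-paper argument to compare against. Your self-contained derivation is sound at every step: the congruence reduction of \eqref{eq:26} to $0\le v'Bv\le\rho\,v'Dv$ is valid because $D(t)^{1/2}$ is invertible; the identity $v'Bv=\tfrac12\sum_{i\ne j}W_{ij}(v_i+v_j)^2$ follows from $B_{ii}=1-W_{ii}=\sum_{j\ne i}W_{ij}$ and the symmetry of $W$; and the matching of the term $S=2\sum_i(1-W_{ii})v_i^2$ in both $v'Bv$ and $v'Dv$, combined with the monotonicity of $s\mapsto s/(s+t)$, delivers exactly $\rho=2(1-\delta)/(2(1-\delta)+\alpha m)$ with no dependence on $M$, as you correctly emphasize. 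For comparison, the original argument in \cite{mo15} reaches the upper bound slightly more compactly: since $W$ is symmetric doubly stochastic, $W\preceq I$, hence $B=I-2W_d+W\preceq 2(I-W_d)$, and therefore $D(t)^{-1/2}BD(t)^{-1/2}\preceq 2D(t)^{-1/2}(I-W_d)D(t)^{-1/2}$, which is a \emph{diagonal} matrix with entries $2(1-W_{ii})/\bigl(\alpha G_{ii}(t)+2(1-W_{ii})\bigr)\le\rho$; your inequality $(v_i+v_j)^2\le2(v_i^2+v_j^2)$ is precisely the entrywise form of $W\succeq -I$, so the two routes are mathematically equivalent, yours being more explicit and theirs requiring the extra observation that the dominating matrix is diagonal. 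Either way the constant is tight in the same regime, and your closing remark about why the naive eigenvalue-product bound would lose the constant is apt.
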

We can hence write the Hessian inverse as 
\[
H(t)^{-1}=D(t)^{-1/2}\sum_{k=0}^{\infty}\left(D(t)^{-1/2}BD(t)^{-1/2}\right)^{k}D(t)^{-1/2}.
\]
Hence the Newton direction can be represented as 
\begin{align}\label{eq:dCom}& d(t) = -D(t)^{-1/2}\sum_{k=0}^{\infty}\left(D(t)^{-1/2}BD(t)^{-1/2}\right)^{k}D(t)^{-1/2} g(t).\end{align}

Following the same analysis as in \cite{mo15}, we can check the distributed implementation of the above equation. We first note that each of the diagonal elements of $D(t)$ can be computed as 
\begin{equation*}
D_{ii}(t)=\alpha\nabla^{2}f_{i}(x_{i}(t))+2(1-W_{ii}),
\end{equation*}
which can be obtained locally at each node $i$, since both $f_i$ and $W_{ii}$ are local information. Moreover, elements of matrix $B$ satisfy
\[B_{ii}=1-2W_{ii}+W_{ii} = 1-W_{ii},\quad B_{ij}=W_{ij},\]  which can also be computed using local information available to agent $i$. The multiplication by diagonal matrix $D(t)^{-1/2}$ is effectively scaling using local information, which can be implemented locally. Multiplication of matrix $B$ corresponds to communicating with immediate neighbors, which can also be carried out locally. The $k^{th}$ order term in Eq. \eqref{eq:dCom}, can be computed via $k$ local neighborhood information exchanges, i.e., information from neighbors of $k-$hop away. Hence Newton direction $d$ can be computed using local information. However, due to computation limitation, we will truncate the series to include only finitely many terms and form an approximation of the Newton direction, which results in the network Newton algorithm presented in \cite{mo15}. Since our asynchronous algorithm aims to minimize coordination, we will only keep the $0^{th}$ and $1^{st}$ order terms as the basis for asynchronous algorithm development. We denote by $\hat H(t)^{-1}$ the approximation of Hessian inverse using the first two terms of the infinite series, i.e., 
\be\label{eq:defHatH}\hat{H} (t)^{-1}=D(t)^{-1/2}\left[I+D(t)^{-1/2}BD(t)^{-1/2}\right]D(t)^{-1/2},\ee
resulting in Newton direction approximation defined by 
 \be\label{eq:defd}d(t) = -\hat{H}(t)^{-1}g(t).\ee

\subsection{Asynchronous Network Newton}
Based on the preliminaries from the previous section, we can now develop our asynchronous network Newton algorithm. We assume that each agent is associated with a Poisson clock, which ticks according to a Poisson process. The clocks all have the same parameters and are independent from each other. Whenever the Poisson clock ticks, an agent wakes up and updates using local information and information from immediate neighbors to compute its local Newton direction, i.e., $[d(t)]_i$ according to Eq.\ \eqref{eq:defd}. We call this agent {\it active} (activated). We assume that one$-$hop neighbors of the active agent are notified and can perform some basic computations. Since the clocks are Poisson, the probability of having multiple clock ticks at exactly the same time is zero. We assume that the update can be done in a much faster time scale than the clock activations, i.e., the active agent can finish update before another activation happens. When we are only concerned with the total number of updates (instead of total time elapsed), we can equivalently count the number of iterates by increasing the iteration counter by one, whenever any agent is active. In this view, at each iteration, each agent is activated randomly with equal probability, $p=1/n$, and updates its corresponding variable. The algorithm is given as following in Algorithm \ref{async NN}. 
\begin{algorithm}
\caption{Asynchronous Network Newton}\label{async NN}
\begin{algorithmic}[1]
\State Initialization:  
For $i=1, 2, ..., n$, each agent $i$:\\sets $x_{i}(0)=0$, computes $D_{ii}(0), g_i(0), d_i(0), B_{ii}, B_{ij}$:
\[D_{ii}(0)=\alpha\nabla^{2}f_{i}(x_{i}(0))+2(1-W_{ii}),\quad
g_{i}(0)=(1-W_{ii})x_{i}(0)+\alpha\nabla f_{i}(x_{i}(0)),\] 
\[d_{i}^{(0)}(0)=-D_{ii}(0)^{-1}g_{i}(0),\quad
B_{ii}=1-W_{ii}, \quad B_{ij}=W_{ij},\]
and broadcasts $d_i^{(0)}(0)$ to all neighbors, stores received $d_j^{(0)}$, $x_j$ values from neighbors.
\For{$t=1,2,...$}
\State An agent $i\in \left\lbrace 1,2,...,n\right\rbrace$ is active according to its local clock.
\State \re{Active agent $i$ computes the local Newton direction $d_i(t-1)$ using the most recent information from neighbors, $d_j^{(0)}(t-1)$ for $j$ in $\mathcal{N}_i$ as 
\begin{align*}&d_{i}(t-1)=D_{ii}(t-1)^{-1}\bigl[B_{ii}d_{i}^{(0)}(t-1)-g_{i}(t-1)+\sum_{j\in\mathcal{N}_{i}}B_{ij}d_{j}^{(0)}(t-1)\bigr].\end{align*}}
\State Active agent $i$ takes a Newton step and updates its local iterate by
\[x_{i}(t)=x_{i}(t-1)+\varepsilon d_{i}(t-1)\]
\State Active agent updates $D_{ii}(t), g_i(t), d_i^{(0)}(t)$ by  
\[D_{ii}(t)=\alpha\nabla^{2}f_{i}(x_{i}(t))+2(1-W_{ii}),\]
\[g_{i}(t)=(1-W_{ii})x_{i}(t)+\alpha\nabla f_{i}(x_{i}(t))-\sum_{j\in\mathcal{N}_{i}}W_{ij}x_{j}(t-1),\]
\[d_{i}^{(0)}(t)=-D_{ii}(t)^{-1}g_{i}(t)
 \]
\State Active agent $i$ broadcasts $d_{i}^{(0)}(t), x_i(t)$ to its neighbors. 
\State \re{All agents $j\in\mathcal{N}_i$, listen and store received $d_i^{(0)}(t)$ and $x_i(t)$, update $g_j(t)$ and $d_j^{(0)}(t)$ similar to step $7$, and broadcast $d_j^{(0)}(t)$ to their neighbors.} 
\State \re{All inactive agents $l\in\mathcal{N}_j$ passively listen and store received  $d_j^{(0)}(t)$ values from $j\in\mathcal{N}_i$. All other variables remain at their previous values.}
\EndFor
\State \textbf{end for}
\end{algorithmic}
\end{algorithm}

We next analyze the asynchronous feature of our proposed algorithm. We note that using Eq. (\ref{eq:defHatH}) and Eq. (\ref{eq:defd}), the Newton step in our algorithm can be written as \[d(t)=-D(t)^{-1}g(t)-D(t)^{-1}BD(t)^{-1}g(t).\]We denote by $d^{(0)}(t)$ the the Newton direction in which the Hessian matrix is approximated using the $0^{th}$ order term of the Taylor's expansion, i.e., $d^{(0)}(t)=-D(t)^{-1}g(t)$ . Hence the Newton direction for asynchronous network Newton is equal to\[d(t)=D(t)^{-1}\left(Bd^{(0)}(t)-g(t)\right).\]Nothing that $D(t)$ is diagonal and $B$ is representing the underlying graph of the network, the Newton direction for each agent can be written as \[d_{i}(t)=D_{ii}(t)^{-1}\bigl[B_{ii}d_{i}^{(0)}(t)-g_{i}(t)+\sum_{j\in\mathcal{N}_{i}}B_{ij}d_{j}^{(0)}(t)\bigr].
\]where $g_i(t)$ is computed using Eq. (\ref{eq:gradient}).
We observe that when an agent $j$ is not active, it keeps previous values of $D_{jj}, g_j, x_j, d_j^{(0)}$. Formally, if we denote by $\tau_j(t)$,  with $0\leq\tau_{j}(t)\leq t$, the most recent iteration count up to and include $t$ when agent $j$ was active, then we have for agent $j$, which is not active at iteration t, $x_j(t) = x_j(t-1)= x_j(\tau_j(t))$. Our algorithm is {\it totally asynchronous}, in the sense that it does not assume each agent updates at least once within a certain bounded number of iterations \cite{be89}.  

We next verify that the algorithm can be indeed implemented in a distributed way. In this algorithm, each agent $i$ stores the most up to date local information $D_{ii}$ (scalar), $g_i$, $d_i^{(0)}, W_{ii}, W_{ij}. B_{ii}, B_{ij}$ and information $d_j^{(0)}$ and $x_j$ received from neighbor $j$, hence requires access to storage/memory which scales with the degree of the node. Once we have this storage, and the fact that each agent has access to local  gradient and Hessian information  $\nabla f_i$, $\nabla^2 f_i$, then the initialization phase can be done in a distributed way. Moreover at each iteration $t$, the updates at the active agent can be computed through local operations, using the most recently available information from the neighbors and local objective function information. Once an agent finishes its iterate, it broadcasts updated information $d_i^{(0)}$ and $x_i$ to its neighbors. \re{The agent $j\in\mathcal{N}_i$ receives this information from active agent $i$, updates $g_j(t)$ and $d_j^{(0)}(t)$ using the new information and keeps previous values of $D_{jj}(t-1)$ and $x_j(t-1)$. Agent $j\in\mathcal{N}_i$ broadcasts its most recent $d_j^{(0)}(t)$ to its neighbors.} When an agent is not active, we assume that it may still receive information. This can be achieved by maintaining a queue for each neighbor, and when the agent is active, it reads the most recent information from each queue. Thus Algorithm \ref{async NN} may be implemented in a distributed asynchronous way. 
\ew{To implement the asynchronous algorithm, we remark that each agent does not need a counter of the iteration number. Instead, each agent simply needs to maintain the most updated information of itself $D_{ii}(t)$ and its neighbors $j$ in $\mathcal{N}_i$, $d_j^{(0)}(t)$ and $x_j(t)$. One way to implement this is to have some memory allocated for each neighbor and whenever new information is received, the old information is overwritten. We also assume that the clock activation happens on a slower time scale than the agents update, which implies that only one agent is updating at a time.\footnote{This type of asynchronous algorithm is also known as {\it randomized} algorithm.}
}
\section{Convergence Analysis}\label{sec:conv}
In this section, we first present some existing preliminaries in Section \ref{sec:prel}, including some key relations which will then be used to show almost sure \ew{convergence and rate of} convergence of the proposed asynchronous method in Section \ref{sec:convAsync} and establish local quadratic rate of convergence (in expectation) in Section \ref{sec:Quad}. 

\subsection{Preliminaries}\label{sec:prel}
The first four lemmas are adopted from synchronous network Newton method proposed in \cite{mo15}. These lemmas \ew{have been established} in \cite{mo15} \ew{using only} the characteristics of the local objective functions and the consensus matrix $W$ \ew{and are independent of the algorithm implementation. We restate them here for completeness.}
\begin{lemma} \label{lemma 01}  If Assumption \ref{assm:LipHessian} holds, then for every $x, \bar{x}\in\mathbb{R}^{n}$\ew{,} the Hessian matrix, $H(x):=\nabla^{2}F(x)$ , is $\alpha L$-Lipschitz
continuous, i.e.,

\begin{equation*}
\big\Vert H(x)-H(\bar{x})\big\Vert \leq\alpha L\big\Vert x-\bar{x}\big\Vert.
\end{equation*}
\end{lemma}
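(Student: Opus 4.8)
The plan is to reduce the statement about the full Hessian $H(x) = \nabla^2 F(x)$ to the componentwise Lipschitz bound guaranteed by Assumption~\ref{assm:LipHessian}. From the definition $F(x) = \frac{1}{2}x^T(I-W)x + \alpha\sum_{i=1}^n f_i(x_i)$ [c.f.\ \eqref{eq:defF}], I would first compute $H(x) = \nabla^2 F(x)$ explicitly. The quadratic term $\frac{1}{2}x^T(I-W)x$ contributes a constant Hessian $I-W$, which is independent of $x$, so it drops out entirely when forming the difference $H(x) - H(\bar{x})$. The only $x$-dependent part of the Hessian is the diagonal matrix $\alpha G(x)$ with $G_{ii}(x) = \nabla^2 f_i(x_i)$ [c.f.\ \eqref{eq:defG}]. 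Hence the difference reduces to $H(x) - H(\bar{x}) = \alpha\left(G(x) - G(\bar{x})\right)$, a purely diagonal matrix.

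The key step is then to bound the spectral norm of this diagonal difference. Since $\alpha(G(x) - G(\bar{x}))$ is diagonal with entries $\alpha(\nabla^2 f_i(x_i) - \nabla^2 f_i(\bar{x}_i))$, its $2$-norm equals the largest absolute diagonal entry, i.e.\ $\norm{H(x) - H(\bar{x})} = \alpha\max_i \big|\nabla^2 f_i(x_i) - \nabla^2 f_i(\bar{x}_i)\big|$. Applying Assumption~\ref{assm:LipHessian} to each coordinate gives $\big|\nabla^2 f_i(x_i) - \nabla^2 f_i(\bar{x}_i)\big| \leq L|x_i - \bar{x}_i|$ (here the Euclidean norm on $\mathbb{R}$ is just absolute value). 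Therefore $\norm{H(x) - H(\bar{x})} \leq \alpha L \max_i |x_i - \bar{x}_i| = \alpha L \norm{x - \bar{x}}_\infty \leq \alpha L \norm{x - \bar{x}}$, where the last inequality uses $\norm{\cdot}_\infty \leq \norm{\cdot}_2$. This chain delivers exactly the claimed $\alpha L$-Lipschitz bound.

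I do not expect a genuine obstacle here, since the result is essentially a bookkeeping consequence of the separable structure of the $x$-dependent part of $F$ together with the per-coordinate Lipschitz assumption. The one point requiring mild care is the norm juggling for the diagonal matrix: one must justify that the spectral norm of a diagonal matrix equals its largest diagonal magnitude, and that bounding the max coordinate by the Euclidean norm is valid. Both are standard. The cleanest presentation simply states the Hessian computation, observes cancellation of the constant $I-W$ term, and then estimates the diagonal remainder coordinatewise using Assumption~\ref{assm:LipHessian}.
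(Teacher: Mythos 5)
Your proof is correct and complete. The paper itself does not prove this lemma---it is restated from \cite{mo15} without argument---and your derivation (the constant $I-W$ term cancels in $H(x)-H(\bar x)$, leaving the diagonal matrix $\alpha(G(x)-G(\bar x))$ whose spectral norm is the largest diagonal magnitude, bounded coordinatewise by Assumption~\ref{assm:LipHessian} and then by $\alpha L\big\Vert x-\bar x\big\Vert$ via $\Vert\cdot\Vert_\infty\leq\Vert\cdot\Vert_2$) is exactly the standard argument used in that reference.
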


\begin{lemma} \label{lemma 1} If Assumptions \ref{assm:BoundedHessian},\ref{assm:LipHessian} and \ref{assm:Consensus} hold, the eigenvalues
of $H(t)$ , $D(t)$ , and $B$ [c.f.\ Eqs.\ \eqref{eq:defH}, \eqref{eq:defDB}] are bounded for all $t$ by
\begin{align*}
\alpha mI\preceq & H(t) \preceq(2(1-\delta)+\alpha M)I,\\
(2(1-\Delta)+\alpha m)I&\preceq D(t) \preceq(2(1-\delta)+\alpha M)I,\\
\boldsymbol{0} &\preceq B\preceq 2(1-\delta)I.
\end{align*}
\end{lemma}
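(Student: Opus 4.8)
The plan is to bound the eigenvalues of each of the three matrices $H(t)$, $D(t)$, and $B$ separately, using the explicit definitions in Eqs.\ \eqref{eq:defH} and \eqref{eq:defDB} together with the three standing assumptions. Throughout I would work with the ordering of symmetric matrices in the positive semidefinite sense and repeatedly use the elementary fact that if $P\preceq Q$ and $R\preceq S$ then $P+R\preceq Q+S$, and that scaling the diagonal matrix $G(t)$ or $I-W_d$ preserves these bounds since they are already diagonal.

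\textbf{Bounding $B$.} First I would handle $B=I-2W_d+W$, since it is the simplest and feeds into nothing else. Writing $B = (I-W_d) + (W-W_d)$ separates the diagonal part $I-W_d$ (whose entries are $1-W_{ii}$) from the off-diagonal part $W-W_d$. The cleanest route, however, is to recall from the splitting $H=D-B$ that $B$ is exactly the matrix whose nonnegativity was used to justify the Neumann series, and to invoke Lemma \ref{proposition 2}: since $D(t)^{-1/2}BD(t)^{-1/2}\succeq 0$ and $D(t)$ is positive definite, we get $B\succeq \boldsymbol 0$. For the upper bound, I would write $B = D(t) - H(t)$ and note $H(t)=I-W+\alpha G(t)\succeq \alpha m I \succ 0$, so $B\preceq D(t)$; combined with the forthcoming upper bound $D(t)\preceq(2(1-\delta)+\alpha M)I$ this is slightly loose, so instead I would bound $B$ directly: the eigenvalues of $B$ coincide with those of $2(I-W_d)-(I-W)=2(I-W_d)-(D(t)-\alpha G(t))$, but the sharp constant $2(1-\delta)$ suggests simply using $0\preceq I - W$ (true since $W\mathbbm 1=\mathbbm 1$ and $W$ is stochastic-like with spectral radius bounds) together with $I-W_d\preceq(1-\delta)I$, giving $B=2(I-W_d)-(I-W)\preceq 2(1-\delta)I$.

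\textbf{Bounding $D(t)$ and $H(t)$.} For $D(t)=\alpha G(t)+2(I-W_d)$, I would bound each summand. By Assumption \ref{assm:BoundedHessian}, $mI\preceq G(t)\preceq MI$, hence $\alpha m I\preceq \alpha G(t)\preceq \alpha M I$. By Assumption \ref{assm:Consensus}, $\delta\leq W_{ii}\leq\Delta$, so $2(1-\Delta)I\preceq 2(I-W_d)\preceq 2(1-\delta)I$. Adding these gives $(2(1-\Delta)+\alpha m)I\preceq D(t)\preceq(2(1-\delta)+\alpha M)I$, as claimed. For $H(t)=I-W+\alpha G(t)$, the same $\alpha G(t)$ bound applies, and for $I-W$ I would use $\mathrm{null}\{I-W\}=\mathrm{span}\{\mathbbm 1\}$ together with $W'=W$ and $0\leq W_{ij}<1$ to argue $0\preceq I-W$; the upper bound $I-W\preceq 2(1-\delta)I$ follows from the same reasoning as for $B$ (indeed $I-W = B - 2(W-W_d)$, or more directly by the Gershgorin/row-sum argument using $W\mathbbm 1=\mathbbm 1$). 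Combining yields $\alpha m I\preceq H(t)\preceq(2(1-\delta)+\alpha M)I$.

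\textbf{Main obstacle.} The routine part is the additivity of the eigenvalue bounds on $D(t)$, which is immediate. The one genuinely delicate step is pinning the constant $2(1-\delta)$ as the upper bound on $I-W$ (and on $B$): this is sharper than the crude bound $\lambda_{\max}(I-W)\leq 2$ that one would get from $\|W\|\leq 1$, and it relies on exploiting the diagonal lower bound $W_{ii}\geq\delta$ in concert with the doubly-stochastic-type structure $W\mathbbm 1=\mathbbm 1$, $W_{ij}\geq 0$. I would obtain it via a Gershgorin-disc argument on $I-W$: each disc is centered at $1-W_{ii}\leq 1-\delta$ with radius $\sum_{j\neq i}W_{ij}=1-W_{ii}\leq 1-\delta$, so every eigenvalue lies in $[0,\,2(1-\delta)]$, which simultaneously delivers both the nonnegativity and the sharp upper constant. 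Since this is the first four lemmas are quoted verbatim from \cite{mo15}, I would in practice simply cite that reference for the detailed computation, but the Gershgorin route above is the self-contained argument I would reconstruct.
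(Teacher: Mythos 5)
Your proposal is correct, but note that the paper itself offers no proof of this lemma: it is restated verbatim from \cite{mo15} with an explicit remark that it depends only on the assumptions on $f_i$ and $W$, so there is nothing in the paper to compare line-by-line against. Your reconstruction is the standard argument and all the bounds check out: the additive bounds $\alpha m I\preceq \alpha G(t)\preceq \alpha M I$ and $2(1-\Delta)I\preceq 2(I-W_d)\preceq 2(1-\delta)I$ give the $D(t)$ bounds immediately, and you correctly identify the only nontrivial step as pinning $0\preceq I-W\preceq 2(1-\delta)I$ and $0\preceq B\preceq 2(1-\delta)I$, which your Gershgorin computation (discs centered at $1-W_{ii}$ with radius $\sum_{j\neq i}W_{ij}=1-W_{ii}$, using $W\mathbbm{1}=\mathbbm{1}$ and $W_{ij}\geq 0$) delivers for both matrices at once, since $B$ and $I-W$ have the same diagonal and the same off-diagonal magnitudes. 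One caveat: your first suggested route to $B\succeq 0$, via Lemma \ref{proposition 2}, should be dropped --- that lemma is itself proved in \cite{mo15} by first establishing $B\succeq 0$, so invoking it here is circular in the natural order of development; your Gershgorin fallback is the right self-contained argument and makes the detour through $B=D(t)-H(t)$ unnecessary as well.
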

\begin{lemma}  \label{lemma:lemma 4.2} Under Assumptions \ref{assm:BoundedHessian} and \ref{assm:Consensus}, the eigenvalues of the approximated Hessian inverse [cf.\ Eq.\ \eqref{eq:defHatH}] are bounded for all $t$ by
\[
\lambda I\preceq\hat{H}(t)^{-1}\preceq\Lambda I\,,
\]
where 
\be\label{eq:defLambda}
\varLambda=\frac{1+\rho}{2(1-\Delta)+\alpha m}, \lambda=\frac{1}{2(1-\delta)+\alpha M}\,\,\,.
\ee
\end{lemma}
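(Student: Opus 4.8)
The plan is to reduce the whole statement to a congruence transformation plus the two preceding lemmas. First I would introduce the shorthand $M(t) := D(t)^{-1/2}BD(t)^{-1/2}$ so that, by definition \eqref{eq:defHatH}, $\hat{H}(t)^{-1} = D(t)^{-1/2}\bigl(I + M(t)\bigr)D(t)^{-1/2}$. Lemma \ref{proposition 2} supplies $\mathbf{0}\preceq M(t)\preceq\rho I$, from which the scalar sandwich $I \preceq I + M(t) \preceq (1+\rho)I$ follows immediately. Note that $D(t)\succ 0$ (established right before the statement) is what lets me form $D(t)^{-1/2}$ in the first place.

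Next I would push these bounds on $I + M(t)$ through the congruence by $D(t)^{-1/2}$. The elementary fact I rely on is that conjugation by a symmetric invertible matrix preserves the Loewner order: if $c_1 I \preceq A \preceq c_2 I$, then, writing the quadratic form $y' D(t)^{-1/2} A D(t)^{-1/2} y = (D(t)^{-1/2}y)' A (D(t)^{-1/2}y)$ and applying the bounds on $A$ to the vector $D(t)^{-1/2}y$, one gets
\[
c_1\, D(t)^{-1}\preceq D(t)^{-1/2} A\, D(t)^{-1/2}\preceq c_2\, D(t)^{-1},
\]
since $D(t)^{-1/2} I\, D(t)^{-1/2} = D(t)^{-1}$. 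Taking $A = I + M(t)$, $c_1 = 1$, $c_2 = 1+\rho$ yields
\[
D(t)^{-1}\preceq \hat{H}(t)^{-1}\preceq (1+\rho)\, D(t)^{-1}.
\]

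Finally I would invoke the bound $(2(1-\Delta)+\alpha m)I\preceq D(t)\preceq (2(1-\delta)+\alpha M)I$ from Lemma \ref{lemma 1} and invert it, using that matrix inversion reverses the Loewner order on positive definite matrices, to obtain
\[
\frac{1}{2(1-\delta)+\alpha M}\,I\preceq D(t)^{-1}\preceq \frac{1}{2(1-\Delta)+\alpha m}\,I.
\]
Chaining the lower bound $\hat{H}(t)^{-1}\succeq D(t)^{-1}$ with the lower bound on $D(t)^{-1}$ gives $\hat{H}(t)^{-1}\succeq \lambda I$, and chaining the upper bound $\hat{H}(t)^{-1}\preceq (1+\rho)D(t)^{-1}$ with the upper bound on $D(t)^{-1}$ gives $\hat{H}(t)^{-1}\preceq \Lambda I$, matching the constants in \eqref{eq:defLambda}.

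The argument is essentially mechanical, so I do not expect a genuine obstacle. The only two points requiring a moment of care are the two order-behaviors I use in opposite directions: the congruence $A\mapsto D(t)^{-1/2}A\,D(t)^{-1/2}$ \emph{preserves} $\preceq$, so the sandwich on $I+M(t)$ transfers intact, whereas inversion \emph{reverses} $\preceq$, so the bounds on $D(t)$ flip when passing to $D(t)^{-1}$. Both are immediate from the quadratic-form characterization of the Loewner order, and positive definiteness of $D(t)$ is exactly what makes both operations legitimate.
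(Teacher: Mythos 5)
Your proof is correct and is essentially the argument the paper relies on: the paper does not reprove this lemma but cites it from \cite{mo15}, where the proof proceeds exactly as you do, sandwiching $I + D(t)^{-1/2}BD(t)^{-1/2}$ between $I$ and $(1+\rho)I$ via Lemma \ref{proposition 2}, pushing the bounds through the congruence by $D(t)^{-1/2}$, and then applying the inverted eigenvalue bounds on $D(t)$ from Lemma \ref{lemma 1}. Nothing is missing.
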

The next lemma from \cite{po87} will be used to establish almost sure convergence of the asynchronous network Newton algorithm.
\begin{lemma}  \label{lemma:lemma 4.5} Let $\left(\Omega,\mathcal{\,F},\,\mathcal{P}\right)$
be a probability space and $\mathcal{F}_{0}\subseteq\mathcal{F}_{1}\subseteq...$
be a sequence of sub $\sigma$- fields of $\mathcal{F}$. Let $\left\{ X_{t}\right\} ,\left\{ Y_{t}\right\} ,\left\{ Z_{t}\right\} ,$
and $\left\{ W_{t}\right\} $be $\mathcal{F}_{t}$ -measurable random
variables such that $\left\{X_{t}\right\}$ is bounded below and $\left\{ Y_{t}\right\} $
, $\left\{ Z_{t}\right\} $ , and $\left\{W_{t}\right\}$ are non-negative with $\sum_{t=0}^{\infty}Y_{t}<\infty$
and $\sum_{t=0}^{\infty}W_{t}<\infty$ , if 
\begin{equation}
\begin{aligned}
\mathbb{E}\left[X_{t+1}\Big|\mathcal{F}_{t}\right]\leq(1+Y_{t})X_{t}-Z_{t}+W_{t}\,,\label{eq:29}
\end{aligned}
\end{equation}
then \fm{with probability 1,} $\left\{ X_{t}\right\}$ converges and $\sum_{t=0}^{\infty}Z_{t}<\infty$.
\end{lemma}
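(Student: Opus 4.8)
The plan is to recognize this statement as the classical almost-supermartingale (Robbins--Siegmund) convergence theorem and to prove it by transforming the recursion \eqref{eq:29} into a genuine supermartingale. First I would reduce to the case $X_t\ge 0$: since $X_t$ is bounded below, say $X_t\ge -K$, the shifted variables $\tilde X_t:=X_t+K\ge 0$ satisfy the same inequality, because $\mathbb{E}[\tilde X_{t+1}\mid\mathcal{F}_t]\le (1+Y_t)\tilde X_t-Z_t+W_t-KY_t\le (1+Y_t)\tilde X_t-Z_t+W_t$ using $Y_t\ge 0$. Convergence of $\tilde X_t$ and finiteness of $\sum_t Z_t$ are equivalent to the corresponding statements for $X_t$, so henceforth I assume $X_t\ge 0$.

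Next I would introduce the predictable normalizer $\beta_t:=\prod_{s=0}^{t-1}(1+Y_s)$ with $\beta_0=1$, which is $\mathcal{F}_{t-1}$-measurable, nondecreasing, at least $1$, and which converges to a finite limit $\beta_\infty$ since $\log\beta_t=\sum_{s<t}\log(1+Y_s)\le\sum_s Y_s<\infty$ almost surely. Dividing \eqref{eq:29} by $\beta_{t+1}=(1+Y_t)\beta_t$ and writing $X'_t:=X_t/\beta_t$, $Z'_t:=Z_t/\beta_{t+1}$, $W'_t:=W_t/\beta_{t+1}$ gives $\mathbb{E}[X'_{t+1}\mid\mathcal{F}_t]\le X'_t-Z'_t+W'_t$ with $0\le W'_t\le W_t$. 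Compensating by the running sum of $W'$, I set $V_t:=X'_t-\sum_{s=0}^{t-1}W'_s$, which is $\mathcal{F}_t$-measurable and satisfies $\mathbb{E}[V_{t+1}\mid\mathcal{F}_t]\le V_t-Z'_t\le V_t$, so $\{V_t\}$ is a supermartingale.

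The main obstacle is that $V_t$ is only bounded below by $-\sum_s W'_s$, a random quantity that is finite almost surely but need not be integrable, so the supermartingale convergence theorem does not apply directly; this is exactly where the hypotheses $\sum Y_t<\infty$ and $\sum W_t<\infty$ holding only almost surely (rather than in expectation) bite. I would resolve this by localization. For each $m$ define the stopping time $\tau_m:=\inf\{t:\sum_{s=0}^{t}W_s\ge m\}$, so that on $\{t\le\tau_m\}$ one has $\sum_{s=0}^{t-1}W'_s\le\sum_{s=0}^{t-1}W_s\le m$, and hence the stopped process $V_{t\wedge\tau_m}\ge -m$ is a supermartingale bounded below by a deterministic constant. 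The supermartingale convergence theorem then shows $V_{t\wedge\tau_m}$ converges almost surely, and telescoping $\mathbb{E}[V_{(t+1)\wedge\tau_m}\mid\mathcal{F}_t]\le V_{t\wedge\tau_m}-Z'_t\mathbbm{1}_{\{t<\tau_m\}}$ yields $\mathbb{E}\sum_{t<\tau_m}Z'_t\le\mathbb{E}[V_0]+m<\infty$, so $\sum_{t<\tau_m}Z'_t<\infty$ almost surely.

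Finally, since $\sum_s(Y_s+W_s)<\infty$ almost surely we have $\tau_m\to\infty$ almost surely, so $\bigcup_m\{\tau_m=\infty\}$ has full probability; on this set $V_t$ converges and $\sum_t Z'_t<\infty$. Because $\sum_s W'_s\le\sum_s W_s<\infty$, convergence of $V_t$ forces $X'_t=V_t+\sum_{s<t}W'_s$ to converge, whence $X_t=\beta_t X'_t$ converges as $\beta_t\to\beta_\infty\in[1,\infty)$; and $Z'_t\ge Z_t/\beta_\infty$ gives $\sum_t Z_t\le\beta_\infty\sum_t Z'_t<\infty$, which completes the argument. I expect the localization step to be the crux, precisely because it is what converts the almost-sure summability hypotheses into the deterministic lower bound needed to invoke the supermartingale convergence theorem.
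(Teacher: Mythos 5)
The paper does not prove this lemma at all: it is imported verbatim from the cited reference \cite{po87} (it is the classical Robbins--Siegmund almost-supermartingale theorem), so there is no in-paper argument to compare against. Your proof is the standard and correct route to this result: shift to reduce to $X_t\ge 0$, normalize by the predictable product $\beta_t=\prod_{s<t}(1+Y_s)$ (finite in the limit because $\sum_t Y_t<\infty$ a.s.), compensate by the partial sums of $W'_t$ to obtain a supermartingale $V_t$, and then localize with the stopping times $\tau_m$ so that the stopped process has a deterministic lower bound $-m$ and the supermartingale convergence theorem applies; undoing the normalization recovers convergence of $X_t$ and summability of $Z_t$ on $\bigcup_m\{\tau_m=\infty\}$, which has full measure. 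The identification of the localization step as the crux is exactly right, since the hypotheses $\sum_t Y_t<\infty$ and $\sum_t W_t<\infty$ hold only almost surely and not in expectation. Two small points worth making explicit if you write this up: (i) the argument tacitly uses that each $X_t$ is integrable (so that $\mathbb{E}[V_0]<\infty$ in the telescoping bound $\mathbb{E}\bigl[\sum_{t<\tau_m}Z'_t\bigr]\le\mathbb{E}[V_0]+m$), an assumption the lemma statement leaves implicit in writing $\mathbb{E}[X_{t+1}\mid\mathcal{F}_t]$; and (ii) ``bounded below'' must be read as bounded below by a deterministic constant uniformly in $t$, which is how it is used in the paper (there $X_t=F(x(t))$ is bounded below by the strong-convexity minorant). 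Neither point affects correctness in the setting where the lemma is applied.
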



The last two lemmas are adopted from \cite{boydbook}, and will be used as key relations in the convergence rate analysis. 

\begin{lemma} \label{lemma:lemma 4.8} If $f:\mathbb{R}^n\to\mathbb{R}$ is a twice continuously differentiable function with
$\iota$-Lipschitz continuous Hessian, then for any $u, v$ in $\mathbb{R}^n$, we have
\begin{equation*}
\begin{aligned}
\big\Vert \nabla f(v)-\nabla f(u)-\nabla^{2}f(u)(v-u)\big\Vert \leq\frac{\iota}{2}\big\Vert v-u\big\Vert^2.
\end{aligned}
\end{equation*}
\end{lemma}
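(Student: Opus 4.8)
The plan is to reduce the statement to the fundamental theorem of calculus applied along the segment joining $u$ and $v$, and then to exploit the Lipschitz continuity of the Hessian. First I would parametrize the segment by $u + s(v-u)$ for $s\in[0,1]$ and write the gradient difference as an integral of the Hessian along this path:
\[
\nabla f(v)-\nabla f(u) = \int_0^1 \nabla^2 f\bigl(u+s(v-u)\bigr)(v-u)\,ds.
\]
Since $\nabla^2 f(u)(v-u) = \int_0^1 \nabla^2 f(u)(v-u)\,ds$, subtracting the constant-integrand version gives the key identity
\[
\nabla f(v)-\nabla f(u)-\nabla^2 f(u)(v-u) = \int_0^1 \bigl[\nabla^2 f\bigl(u+s(v-u)\bigr)-\nabla^2 f(u)\bigr](v-u)\,ds.
\]

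Next I would take norms on both sides, move the norm inside the integral via the triangle inequality for integrals, and bound the integrand using the operator-norm inequality $\norm{Ax}\le \norm{A}\,\norm{x}$, which yields
\[
\norm{\nabla f(v)-\nabla f(u)-\nabla^2 f(u)(v-u)} \le \int_0^1 \norm{\nabla^2 f\bigl(u+s(v-u)\bigr)-\nabla^2 f(u)}\,\norm{v-u}\,ds.
\]
Applying the $\iota$-Lipschitz hypothesis to the two arguments $u+s(v-u)$ and $u$, whose distance is $s\norm{v-u}$, bounds the matrix-norm factor above by $\iota\, s\,\norm{v-u}$.

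Finally, I would substitute this bound and evaluate the elementary integral $\int_0^1 s\,ds = \tfrac{1}{2}$, obtaining
\[
\norm{\nabla f(v)-\nabla f(u)-\nabla^2 f(u)(v-u)} \le \iota\,\norm{v-u}^2\int_0^1 s\,ds = \frac{\iota}{2}\norm{v-u}^2,
\]
which is the claimed estimate. The argument is essentially routine; the only points requiring mild care are justifying the vector-valued fundamental theorem of calculus, which holds because $f$ is twice continuously differentiable so that $s\mapsto \nabla f(u+s(v-u))$ is continuously differentiable on $[0,1]$, and interchanging the norm with the integral, both of which are standard. No strongly nontrivial obstacle arises here, since the lemma is a purely analytic fact about $f$ and does not interact with the network or asynchronous structure of the problem.
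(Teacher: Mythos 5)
Your proof is correct. The paper does not prove this lemma at all---it simply cites it from a standard reference---and your argument (fundamental theorem of calculus along the segment, Lipschitz bound on the Hessian increment, and the elementary integral $\int_0^1 s\,ds=\tfrac12$) is precisely the standard proof found there, so there is nothing to fix.
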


\begin{lemma} \label{functionbound}
If $f:\mathbb{R}^n\to\mathbb{R}$ is a strongly convex function in $\mathbb{R}^n$  with the minimum value of $f^*$ and $\mu I\preceq\nabla^2f(y)\preceq \cal{M}I$, then for any $u, v$ in $\mathbb{R}^n$, we have
\[f(u)\geq f(v)-\frac{1}{2\mu}\big\Vert\nabla f(v)\big\Vert^2,\] and\[f^*\leq f(v)-\frac{1}{2 \cal{M}}\big\Vert\nabla f(v)\big\Vert^2.\]
\end{lemma}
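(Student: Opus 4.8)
The plan is to obtain both inequalities from the second-order quadratic bounds that the Hessian condition $\mu I\preceq\nabla^2 f(y)\preceq\mathcal{M}I$ imposes on $f$, and then to optimize those bounds over the free argument. First I would invoke Taylor's theorem with the second-order remainder: for any $u,v\in\mathbb{R}^n$ there is a point $\xi$ on the segment joining $u$ and $v$ with $f(u)=f(v)+\nabla f(v)'(u-v)+\tfrac12(u-v)'\nabla^2 f(\xi)(u-v)$. Substituting the two-sided Hessian bound into the quadratic remainder yields the sandwich
\[ f(v)+\nabla f(v)'(u-v)+\tfrac{\mu}{2}\big\Vert u-v\big\Vert^2 \;\leq\; f(u) \;\leq\; f(v)+\nabla f(v)'(u-v)+\tfrac{\mathcal{M}}{2}\big\Vert u-v\big\Vert^2, \]
valid for all $u,v$, which is the only place the Hessian assumption enters.

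For the first claim I would fix $v$ and minimize the left-hand lower bound over $u$. This quadratic is minimized at $u-v=-\tfrac{1}{\mu}\nabla f(v)$, where it attains the value $f(v)-\tfrac{1}{2\mu}\big\Vert\nabla f(v)\big\Vert^2$. Since the lower bound holds for every $u$ and is itself no smaller than its own minimum over $u$, we obtain $f(u)\geq f(v)-\tfrac{1}{2\mu}\big\Vert\nabla f(v)\big\Vert^2$ for all $u,v$, which is precisely the first inequality.

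For the second claim I would apply the right-hand upper bound analogously. Its minimizer over $u$ is $u-v=-\tfrac{1}{\mathcal{M}}\nabla f(v)$, giving minimum value $f(v)-\tfrac{1}{2\mathcal{M}}\big\Vert\nabla f(v)\big\Vert^2$. Evaluating the upper bound at this particular point $\tilde u$ shows $f(\tilde u)\leq f(v)-\tfrac{1}{2\mathcal{M}}\big\Vert\nabla f(v)\big\Vert^2$; since strong convexity guarantees the minimum $f^*=\min_u f(u)$ is finite and attained, we have $f^*\leq f(\tilde u)$, and the second inequality follows by chaining these two estimates.

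The computations are routine once the quadratic sandwich is established, so the only step requiring genuine care is the first one, namely deriving that sandwich from the Hessian bounds. The remaining subtlety is bookkeeping on the direction of the inequalities during minimization: the lower bound is chained \emph{downward} to its minimum over $u$, whereas the upper bound is chained \emph{upward} through the relation $f^*\leq f(\tilde u)$. Strong convexity is also what legitimizes writing $f^*=\min_u f(u)$ in the second part.
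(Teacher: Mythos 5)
Your proof is correct. Note that the paper does not actually prove this lemma---it states it as a known result adopted from \cite{boydbook}---and your argument is precisely the standard one found there: derive the quadratic lower and upper bounds on $f$ from the two-sided Hessian condition via Taylor's theorem, then minimize the lower bound over $u$ to get the first inequality (using only $\mu I\preceq\nabla^2 f$) and evaluate the upper bound at its minimizer $\tilde u=v-\tfrac{1}{\mathcal{M}}\nabla f(v)$ together with $f^*\leq f(\tilde u)$ to get the second (using only $\nabla^2 f\preceq\mathcal{M}I$). The bookkeeping on inequality directions and the completion-of-the-square computations are all handled correctly.
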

\subsection{Convergence of Asynchronous Network Newton Algorithm}\label{sec:convAsync}
In this section, we show that the asynchronous network Newton algorithm converges to an optimal point almost surely with a global linear rate of convergence in expectation. We will first introduce some notation used to connect asynchronous and synchronous algorithms. At each iteration, we define a random diagonal {\it activation matrix} $\Phi(t)$ in $\mathbb{R}^{n\times n}$ by
\begin{equation}
\begin{aligned}
\Phi(t)_{ii}=\begin{cases}
1 & \mbox{if i is active at time t,}\\
0 & \mbox{otherwise.}
\end{cases}\label{eq:15}
\end{aligned}
\end{equation}
This matrix indicates which agent is active at time t. We also use $\mathcal{F}_{t}$ to denote the $\sigma$-field capturing all realizations  (activations) of the algorithm up to and including time $t$. Then conditioned on $\mathcal{F}_{t}$, we can now rewrite the asynchronous newton direction generated by Algorithm \ref{async NN} by 
\begin{equation}
\begin{aligned}
d_{i}(t-1)=\begin{cases}
-\left[\hat{H}(t-1)^{-1}g(t-1)\right]_{i} & \mbox{if i is active at time t,}\\
0 &\mbox{otherwise.}
\end{cases}
\end{aligned}\label{eq:dt-1}
\end{equation}
The asynchronous network Newton update formula can be aggregated as 
\[
x(t)=x(t-1)+\varepsilon d(t-1)\,.\]
Conditioned on $\mathcal{F}_{t-1}$, we have that $d(t-1)$ is a random variable, given by 
\[d(t-1)=-\Phi(t)\hat{H}(t-1)^{-1}g(t-1),\] where the random matrix $\Phi(t)$ chooses one element of $\hat{H}(t-1)^{-1}g(t-1)$ to keep in $d(t-1)$ and makes the rest 0 as in Eq.\ \eqref{eq:dt-1}. Thus, we have that the asynchronous Newton step can be written as
\be\label{eq:xUpdateAsync}x(t)=x(t-1)-\varepsilon\Phi(t)\hat{H}(t-1)^{-1}g(t-1).\ee
We note that since each agent is active with equal probability, we have that \be\label{eq:expPhi}\mathbb{E}[\Phi(t)|\mathcal{F}_{t-1}] = \frac{1}{n}I.\ee

We next establish almost sure convergence of our asynchronous algorithm.
\begin{theorem}  \label{theorem:theorem 4.4} Consider the iterates $\{x(t)\}$ generated by the asynchronous network Newton algorithm as in Algorithm \ref{async NN}, where the stepsize $\varepsilon$
is chosen as 
\begin{equation}
\begin{aligned}
0<\varepsilon\leq 2\left(\frac{\lambda}{\Lambda}\right)^{2}\,,\label{eq:30}
\end{aligned}
\end{equation}
then the sequence $\left\{ F(x(t))\right\} $ converges to its optimal value, denoted by $F(x^*)$,
almost surely.\label{thm:conve}
\end{theorem}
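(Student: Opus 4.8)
The plan is to cast the proof as an application of the supermartingale convergence result of Lemma~\ref{lemma:lemma 4.5}. Because $H(t)\succeq \alpha m I$ for all $t$ (Lemma~\ref{lemma 1}), the objective $F$ is strongly convex, hence admits a unique minimizer $x^*$ and is bounded below by $F(x^*)$. I would therefore work with the process $X_t := F(x(t))-F(x^*)$, which is nonnegative and $\mathcal{F}_t$-measurable. The goal is to establish a one-step inequality of the form $\mathbb{E}[X_t\mid\mathcal{F}_{t-1}]\le X_{t-1}-Z_{t-1}$ with $Z_{t-1}\ge 0$, so that Lemma~\ref{lemma:lemma 4.5} (with $Y_t=W_t=0$) yields almost-sure convergence of $\{F(x(t))\}$ together with $\sum_t Z_t<\infty$, and then to use the latter to pin the limit down to $F(x^*)$.

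To produce that inequality, I would start from the aggregated update \eqref{eq:xUpdateAsync}, $x(t)=x(t-1)-\varepsilon\,\Phi(t)\hat H(t-1)^{-1}g(t-1)$. The upper bound $H(t)\preceq (2(1-\delta)+\alpha M)I=\tfrac1\lambda I$ from Lemma~\ref{lemma 1} says $F$ has $\tfrac1\lambda$-Lipschitz gradient, so the standard descent lemma gives, writing $g=g(t-1)$ and $\hat H^{-1}=\hat H(t-1)^{-1}$ (both $\mathcal{F}_{t-1}$-measurable),
\[
F(x(t))\le F(x(t-1))-\varepsilon\, g^{\top}\Phi(t)\hat H^{-1}g+\frac{\varepsilon^2}{2\lambda}\big\Vert\Phi(t)\hat H^{-1}g\big\Vert^2 .
\]
The technical heart of the argument—and the step I expect to be the main obstacle—is taking $\mathbb{E}[\,\cdot\mid\mathcal{F}_{t-1}]$, since this is where the asynchrony enters. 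The only randomness is the single-coordinate projection $\Phi(t)$; exploiting $\Phi(t)^2=\Phi(t)$ and $\mathbb{E}[\Phi(t)\mid\mathcal{F}_{t-1}]=\tfrac1n I$ from \eqref{eq:expPhi}, the linear term averages to $\tfrac1n g^{\top}\hat H^{-1}g$ and the quadratic term to $\tfrac1n\Vert\hat H^{-1}g\Vert^2$.

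With the eigenvalue bounds $\lambda I\preceq\hat H^{-1}\preceq\Lambda I$ of Lemma~\ref{lemma:lemma 4.2}, so that $g^{\top}\hat H^{-1}g\ge\lambda\Vert g\Vert^2$ and $\Vert\hat H^{-1}g\Vert^2\le\Lambda^2\Vert g\Vert^2$, this collapses to
\[
\mathbb{E}\big[F(x(t))\mid\mathcal{F}_{t-1}\big]\le F(x(t-1))-\frac{\varepsilon}{n}\Big(\lambda-\frac{\varepsilon\Lambda^2}{2\lambda}\Big)\big\Vert g(t-1)\big\Vert^2 .
\]
The stepsize rule \eqref{eq:30} is precisely what is needed here: the bracket $\lambda-\tfrac{\varepsilon\Lambda^2}{2\lambda}$ is nonnegative exactly when $\varepsilon\le 2(\lambda/\Lambda)^2$. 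Defining $Z_{t-1}=\tfrac{\varepsilon}{n}(\lambda-\tfrac{\varepsilon\Lambda^2}{2\lambda})\Vert g(t-1)\Vert^2\ge0$, Lemma~\ref{lemma:lemma 4.5} applied to $X_t=F(x(t))-F(x^*)$ gives that $\{F(x(t))\}$ converges almost surely and that $\sum_{t}Z_t<\infty$.

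Finally I would identify the limit. Since $\sum_t Z_t<\infty$ and the bracket is strictly positive throughout the admissible range, $\sum_t\Vert g(t)\Vert^2<\infty$ almost surely, whence $\Vert g(t)\Vert\to 0$. As $g(t)=\nabla F(x(t))$ and $F$ is $\alpha m$-strongly convex, the first inequality of Lemma~\ref{functionbound} (taken at the minimizer, with $\mu=\alpha m$) gives $0\le F(x(t))-F(x^*)\le\frac{1}{2\alpha m}\Vert g(t)\Vert^2\to 0$. Combined with the almost-sure convergence already obtained, this forces $F(x(t))\to F(x^*)$ almost surely, completing the proof.
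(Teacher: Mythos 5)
Your proposal is correct and follows essentially the same route as the paper's proof: the descent inequality from the $\tfrac{1}{\lambda}$ bound on the Hessian, the conditional-expectation computation using $\mathbb{E}[\Phi(t)\mid\mathcal{F}_{t-1}]=\tfrac{1}{n}I$, the eigenvalue bounds of Lemma \ref{lemma:lemma 4.2}, and the supermartingale Lemma \ref{lemma:lemma 4.5} with $Y_t=W_t=0$. The only difference is cosmetic: you are more explicit than the paper in the final step, pinning the limit to $F(x^*)$ via the strong-convexity inequality of Lemma \ref{functionbound}, where the paper simply states that $\Vert g(t)\Vert\to 0$ and the two results combine.
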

\begin{proof} Using the Taylor's theorem, we have that for any $a$, $b$ in $\mathbb{R}^n$,
\[
F(a)= F(b)+g(b)'(a-b)+\frac{1}{2}(a-b)' H(c)(a-b),
\] for some $c$ on the line segment between $a$ and $b$. By using the bound on Hessian matrix in Lemma \ref{lemma 1} we have
\[
F(a)\leq F(b)+g(b)'(a-b)+\frac{2(1-\delta)+\alpha M}{2}(a-b)'(a-b).
\]
Thus, for any realization, we can substitute $a=x(t)$, $b=x(t-1)$ and $\lambda=\frac{1}{2(1-\delta)+\alpha M}$ from Eq. \ref{eq:defLambda} and have
\begin{equation}
\begin{aligned}
F(x(t))\leq F(x(t-1))+g(t-1)^\prime(x(t)-x(t-1))+\frac{1}{2\lambda}\big\Vert x(t)-x(t-1)\big\Vert ^{2}\,,\label{eq:31}
\end{aligned}
\end{equation}
From Eq. (\ref{eq:xUpdateAsync}), we have
\begin{equation}
x(t)-x(t-1)=-\varepsilon\Phi(t)\hat{H}(t-1)^{-1}g(t-1)\,.\label{phiterm}
\end{equation}
Taking expectation from both sides of (\ref{eq:31}) conditioned on  $\mathcal{F}_{t-1}$ and using (\ref{phiterm}) we get
\begin{equation*}
\begin{aligned}
&\mathbb{E}\left[F(x(t))\Big|\mathcal{F}_{t-1}\right]\leq F(x(t-1))-\varepsilon g(t-1)^\prime\mathbb{E}\left[\Phi(t)\Big|\mathcal{F}_{t-1}\right]\hat{H}(t-1)^{-1}g(t-1)\\&+\frac{\varepsilon^{2}}{2\lambda}\mathbb{E}\left[\big\Vert \Phi(t)\hat{H}(t-1)^{-1}g(t-1)\big\Vert ^{2}\Big|\mathcal{F}_{t-1}\right]\,.\label{eq:32}
\end{aligned}
\end{equation*}
Note that each agent is active with equal probability from iteration $t-1$ to $t$, we have
\begin{align*}\mathbb{E}&\left[\big\Vert \Phi(t)\hat{H}(t-1)^{-1}g(t-1)\big\Vert ^{2}\Big|\mathcal{F}_{t-1}\right]=\sum_{i=1}^{n}\frac{1}{n}\left[\hat{H}(t-1)^{-1}g(t-1)\right]_{i}^{2}=\frac{1}{n}\big\Vert \hat{H}(t-1)^{-1}g(t-1)\big\Vert ^{2},\label{eq:34}
\end{align*}
where the last equality follows from definition of Euclidean norm.
Using the previous two relations and Eq.(\ref{eq:expPhi}), we have 
\begin{equation*}
\begin{aligned}
\mathbb{E}&\left[F(x(t))\Big|\mathcal{F}_{t-1}\right]\leq F(x(t-1))-\frac{\varepsilon }{n}g(t-1)^\prime\hat{H}(t-1)^{-1}g(t-1)+\frac{\varepsilon^{2}}{2\lambda n}\big\Vert \hat{H}(t-1)^{-1}g(t-1)\big\Vert ^{2}\,.\label{eq:32}
\end{aligned}
\end{equation*}

Using Cauchy-Schwarz inequality and Lemma \ref{lemma:lemma 4.2}, which establishes a bound on the approximated Hessian, we have
\[-\frac{\varepsilon }{n}g(t-1)^\prime\hat{H}(t-1)^{-1}g(t-1)\leq -\frac{\varepsilon \lambda}{n}\norm{g(t-1)}^2,\]
and 
\[\big\Vert \hat{H}(t-1)^{-1}g(t-1)\big\Vert ^{2}\leq \Lambda^2\norm{g(t-1)}^2.\]
Combining the three relations above yields
\begin{equation}
\begin{aligned}
\mathbb{E}\left[F(x(t))\Big|\mathcal{F}_{t-1}\right]\leq F(x(t-1))-\left(\frac{\varepsilon\lambda}{n}-\frac{\varepsilon^{2}\Lambda^{2}}{2n\lambda}\right)\big\Vert g(t-1)\big\Vert ^{2}.\label{eq:martingale}
\end{aligned}
\end{equation}

We next argue that the scalar $\frac{\varepsilon\lambda}{n}-\frac{\varepsilon^{2}\Lambda^{2}}{2n\lambda}\geq 0$. We start by rewriting it as 
\[\frac{\varepsilon\lambda}{n}-\frac{\varepsilon^{2}\Lambda^{2}}{2n\lambda} = \frac{2\varepsilon\lambda^2-\varepsilon^2\Lambda^2}{2n\lambda} = \frac{\varepsilon(2\lambda^2-\varepsilon\Lambda^2)}{2n\lambda}.\] Since the stepsize $\varepsilon$ satisfies the bounds in (\ref{eq:30}), i.e.,
\[\varepsilon\leq 2\left(\frac{\lambda}{\Lambda}\right)^{2},
\]
The scalar $\frac{\varepsilon\lambda}{n}-\frac{\varepsilon^{2}\Lambda^{2}}{2n\lambda}$ is nonnegative. In addition, we have that $F(x(t))$ is strongly convex, thus bounded below by its second order approximation \cite{boydbook} and hence bounded below. Therefore, we can use Eq.\ \ref{eq:martingale} Lemma \ref{lemma:lemma 4.5}, where $Y_t = 0$, $W_t=0$, to conclude that the sequence $\left\{ F(x(t))\right\} $
converges almost surely and $\sum_{t=0}^{\infty}\left(\frac{\varepsilon\lambda}{n}-\frac{\varepsilon^{2}\Lambda^{2}}{2n\lambda}\right)\big\Vert g(t-1)\big\Vert ^{2}<\infty$, 
\fm{with probability 1}, which means that $\big\Vert g(t)\big\Vert ^{2}$ converges to
zero \ew{almost surely}. Combining these two results completes the proof of theorem \ref{theorem:theorem 4.4}.
\end{proof}

\ew{The following theorem establishes global linear rate of convergence in expectation of the asynchronous distributed network Newton algorithm.}

\begin{theorem} \label{linconv}
Consider the iterate $\left\{ x(t)\right\} $ generated by the asynchronous network Newton algorithm as in Algorithm \ref{async NN} with any $x(0)$ in a network with more than one agent. If the stepsize
$\varepsilon$ satisfies 
\begin{equation}
\begin{aligned}
0<\varepsilon< \min\,\left\{ 1,\,2\left(\frac{\lambda}{\Lambda}\right)^{2}\right\} \,,\label{eq:step}
\end{aligned}
\end{equation}
then the sequence $\left\{ F(x(t))\right\}$ converges linearly in expectation to its optimal value, i.e.,
\begin{equation*}
\begin{aligned}
\mathbb{E}\left[F(x(t))-F^*\right]\leq\left(1-\beta\right)^{t}\left[F(x(0))-F^*\right]\,.
\end{aligned}
\end{equation*}
\noindent where $\beta=\frac{\alpha m\varepsilon(2\lambda^{2}-\varepsilon\Lambda^{2})}{n\lambda}$.\\
\end{theorem}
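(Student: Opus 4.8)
The plan is to convert the additive one-step descent bound already obtained in the proof of Theorem~\ref{theorem:theorem 4.4} into a multiplicative contraction of the optimality gap. Concretely, I would start from the conditional descent inequality \eqref{eq:martingale},
\[\mathbb{E}\left[F(x(t))\Big|\mathcal{F}_{t-1}\right]\leq F(x(t-1))-\frac{\varepsilon(2\lambda^{2}-\varepsilon\Lambda^{2})}{2n\lambda}\big\Vert g(t-1)\big\Vert^{2},\]
subtract $F^*$ from both sides, and then replace the gradient term $\big\Vert g(t-1)\big\Vert^{2}$ by a lower bound proportional to the gap $F(x(t-1))-F^*$. Since Lemma~\ref{lemma 1} gives $\alpha m I\preceq H(t)$, the objective $F$ is $\alpha m$-strongly convex, so it has a unique minimizer $x^*$ with $F^*=F(x^*)$; this is precisely the setting of Lemma~\ref{functionbound} with $\mu=\alpha m$.

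The key step is the Polyak--\L ojasiewicz-type bound coming from the first inequality of Lemma~\ref{functionbound}. Applying it with $v=x(t-1)$, $u=x^*$, and using $\nabla F(x(t-1))=g(t-1)$, gives $F^*\geq F(x(t-1))-\frac{1}{2\alpha m}\big\Vert g(t-1)\big\Vert^{2}$, i.e.\ $\big\Vert g(t-1)\big\Vert^{2}\geq 2\alpha m\left(F(x(t-1))-F^*\right)$. Substituting this into the descent inequality and collecting constants yields
\[\mathbb{E}\left[F(x(t))-F^*\Big|\mathcal{F}_{t-1}\right]\leq\left(1-\beta\right)\left(F(x(t-1))-F^*\right),\qquad \beta=\frac{\alpha m\varepsilon(2\lambda^{2}-\varepsilon\Lambda^{2})}{n\lambda}.\]
Taking total expectation via the tower property and iterating the recursion from $t$ down to $0$ (the initial gap $F(x(0))-F^*$ being deterministic) then delivers the claimed estimate $\mathbb{E}[F(x(t))-F^*]\leq(1-\beta)^{t}[F(x(0))-F^*]$.

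The one genuine obstacle is verifying that the contraction factor is admissible, namely $0\leq 1-\beta<1$, equivalently $0<\beta\leq 1$; the recursion and the \L ojasiewicz step are otherwise mechanical. Positivity of $\beta$ follows from the \emph{strict} stepsize bound $\varepsilon<2(\lambda/\Lambda)^{2}$ in \eqref{eq:step}, which forces $2\lambda^{2}-\varepsilon\Lambda^{2}>0$ (this is exactly why the strict inequality, rather than the $\leq$ of Theorem~\ref{theorem:theorem 4.4}, is imposed). For $\beta\leq 1$ I would maximize the right-hand side over $\varepsilon$: the quadratic $\varepsilon\mapsto\varepsilon(2\lambda^{2}-\varepsilon\Lambda^{2})$ attains its maximum at $\varepsilon=\lambda^{2}/\Lambda^{2}$, giving $\beta\leq \alpha m\lambda^{3}/(n\Lambda^{2})$. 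Since $\lambda=1/(2(1-\delta)+\alpha M)\leq 1/(\alpha m)$ we have $\alpha m\lambda\leq 1$, and $\lambda\leq\Lambda$ from Lemma~\ref{lemma:lemma 4.2}, so $\beta\leq \lambda^{2}/(n\Lambda^{2})\leq 1/n$. The hypothesis that the network has more than one agent, i.e.\ $n\geq 2$, then guarantees $\beta\leq 1/2<1$, so $1-\beta\in(0,1)$ and the iterated bound is a genuine linear rate.
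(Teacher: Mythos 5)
Your proposal is correct and follows essentially the same route as the paper: both start from the conditional descent bound (\ref{eq:martingale}), apply the strong-convexity (Polyak--{\L}ojasiewicz) inequality of Lemma~\ref{functionbound} to replace $\Vert g(t-1)\Vert^{2}$ by $2\alpha m\left(F(x(t-1))-F^*\right)$, and iterate via the tower property. The only deviation is the check that $\beta<1$: the paper drops the negative term in $\beta$ and uses $\varepsilon\leq 1$ together with $\alpha m<2(1-\delta)+\alpha M$ to get $\beta<2/n$, whereas you maximize $\varepsilon(2\lambda^{2}-\varepsilon\Lambda^{2})$ over $\varepsilon$ to get the slightly sharper $\beta\leq 1/n$ without invoking $\varepsilon\leq 1$ --- both suffice once $n\geq 2$.
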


\begin{proof}
We use Eq. (\ref{eq:martingale}) to prove the global linear rate of convergence. Considering the fact that our objective function, $F(x)$, is strongly convex and using the result of Lemma \ref{functionbound}, we have
\begin{equation}
-\big\Vert g(t-1)\big\Vert ^{2}\leq-2\alpha m\left(F(x(t-1))-F^*\right),\label{eq:bound}
\end{equation}
Subtracting $F^*$ from both sides of (\ref{eq:martingale}) and substituting the bound in Eq. (\ref{eq:bound}), we have
\begin{equation}
\begin{aligned}
\mathbb{E}\left[F(x(t))-F^*\Big|\mathcal{F}_{t-1}\right]\leq\left(1-\beta\right)\left(F(x(t-1))-F^*\right)\,,\label{eq:linconv1}
\end{aligned}
\end{equation}
\noindent where $\beta=\frac{\alpha m\varepsilon(2\lambda^{2}-\varepsilon\Lambda^{2})}{n\lambda}$.\\
We next take expectation from both sides of (\ref{eq:linconv1}) with respect to $\mathcal{F}_{t-2}$ , ..., $\mathcal{F}_{0}$. Using the tower rule of expectations we have

\begin{equation}
\begin{aligned}
&\mathbb{E}\left[F(x(t))-F^*\Big|\mathcal{F}_{0}\right]=\mathbb{E}\left[F(x(t))-F^*\right]\leq\left(1-\beta\right)^{t}\left(F(x(0))-F^*\right)\,.\label{eq:lin}
\end{aligned}
\end{equation}
We note that Eq. (\ref{eq:lin}) implies the global linear convergence in expectation only if $0<\beta<1$. We next argue that $0<\beta<1$. We note that if the stepsize $\varepsilon$ satisfies the condition in Eq. (\ref{eq:step}), we have
\[2\lambda^{2}-\varepsilon\Lambda^{2}>0,\] 
thus, $\beta>0$. We now show that $\beta<1$. We first rewrite $\beta$ as
\[\beta=\frac{2\alpha m\varepsilon\lambda^{2}}{n\lambda}-\frac{\alpha m\varepsilon^2\Lambda^{2}}{n\lambda}.\] We note that $\frac{\alpha m\varepsilon^{2}\Lambda^{2}}{n\lambda}>0$ and $\lambda=\frac{1}{2(1-\delta)+\alpha M}$, [c.f. Lemma \ref{lemma:lemma 4.2}]. Therefore, \[\beta<\frac{2\alpha m\varepsilon}{n\left(2(1-\delta)+\alpha M\right)}.\] 
Because $1-\delta>0$, we have $\alpha m<\alpha M+2(1-\delta)$, using this together with the fact that $\varepsilon\leq1$ we
obtain $\beta<\frac{2}{n}$ . Finally, having more than one agent in the network means that $n\geq2$ which implies that $\beta<1$. 

\end{proof}
\subsection{Local Quadratic Rate of Convergence}\label{sec:Quad}
We now proceed to prove local quadratic convergence rate in expectation for the asynchronous network Newton algorithm.
We state four lemmas which will be used to establish the quadratic rate of convergence of the sequence $\left\lbrace \norm{D(t-1)^{1/2}\big(x(t)-x^*\big)}\right\rbrace$ in a specific interval.\\

\begin{lemma} \label{error}
Consider the approximated Hessian inverse defined in Eq. (\ref{eq:defHatH}), if Assumptions \ref{assm:BoundedHessian} and \ref{assm:Consensus} hold, then
\[D(t)^{1/2}(I-\hat{H}(t)^{-1}H(t))=\big(D(t)^{-1/2}BD(t)^{-1/2}\big)^2D(t)^{1/2}.\]
\end{lemma}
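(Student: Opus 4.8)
The plan is to reduce this identity to a purely algebraic manipulation by expressing everything in terms of the single matrix $M := D(t)^{-1/2}BD(t)^{-1/2}$. Since $D(t)$ is positive definite (as established following Eq.\ \eqref{eq:defDB}), the factors $D(t)^{1/2}$ and $D(t)^{-1/2}$ are well-defined, symmetric, and mutually inverse, so all the factorizations below are legitimate.

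First I would rewrite the exact Hessian using the splitting $H(t) = D(t) - B$ from Eq.\ \eqref{eq:Hsplit} and pull out $D(t)^{1/2}$ symmetrically on both sides, giving
\[H(t) = D(t)^{1/2}\bigl(I - M\bigr)D(t)^{1/2}.\]
Combining this with the definition of the approximate inverse in Eq.\ \eqref{eq:defHatH}, namely $\hat{H}(t)^{-1} = D(t)^{-1/2}(I + M)D(t)^{-1/2}$, the inner factors $D(t)^{-1/2}D(t)^{1/2}$ cancel, and I obtain
\[\hat{H}(t)^{-1}H(t) = D(t)^{-1/2}(I + M)(I - M)D(t)^{1/2} = D(t)^{-1/2}(I - M^2)D(t)^{1/2},\]
where the last step uses the telescoping identity $(I+M)(I-M) = I - M^2$, valid because $M$ commutes with itself.

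Next I would subtract this from the identity. Writing $I = D(t)^{-1/2}\,I\,D(t)^{1/2}$ so that both terms share the same outer factors, I get $I - \hat{H}(t)^{-1}H(t) = D(t)^{-1/2}M^2 D(t)^{1/2}$. Finally, I would left-multiply by $D(t)^{1/2}$, which cancels the leading $D(t)^{-1/2}$ and produces $D(t)^{1/2}\bigl(I - \hat{H}(t)^{-1}H(t)\bigr) = M^2 D(t)^{1/2}$. Recalling that $M^2 = \bigl(D(t)^{-1/2}BD(t)^{-1/2}\bigr)^2$, this is exactly the claimed identity.

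There is no serious obstacle here; the only point requiring care is the ordering of the matrix products, since $D(t)^{1/2}$ and $B$ do not commute in general, so the cancellations must be tracked on the correct side. The entire argument hinges on the observation that after symmetric scaling by $D(t)^{\pm 1/2}$, both $H(t)$ and $\hat{H}(t)^{-1}$ are expressed through the single matrix $M$, at which point the factorization $(I+M)(I-M) = I - M^2$ does all the work and exposes the residual $M^2$ as the approximation error.
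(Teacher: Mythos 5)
Your proof is correct and takes essentially the same route as the paper's: both expand $\hat{H}(t)^{-1}H(t)$ using the splitting $H(t)=D(t)-B$ and the definition of $\hat{H}(t)^{-1}$, observe the telescoping $(I+M)(I-M)=I-M^2$, and identify the residual as the square of the scaled matrix $B$. The only difference is presentational — you carry the symmetric scaling $M=D(t)^{-1/2}BD(t)^{-1/2}$ throughout, while the paper computes with $D(t)^{-1}B$ and re-factors at the end — so no further comment is needed.
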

\begin{proof}
Using the definition of the Hessian matrix, $H(t)$, and its approximated inverse $\hat{H}(t)^{-1}$ from equations (\ref{eq:Hsplit}) and (\ref{eq:defHatH}), we have
\begin{align*}&I-\hat{H}(t)^{-1}H(t)=I-\Big(D(t)^{-1}+D(t)^{-1}BD(t)^{-1}\Big)\big(D(t)-B\big)=I-\Big(I-D(t)^{-1}B+D(t)^{-1}B-\big(D(t)^{-1}B\big)^2\Big)\\&=(D(t)^{-1}B)^2=D(t)^{-1/2}\big(D(t)^{-1/2}BD(t)^{-1/2}\big)^2D(t)^{1/2}\end{align*}
Multiplying both sides of the previous relation by $D(t)^{1/2}$ from the right, completes the proof.
\end{proof}
\begin{lemma} \label{lemma:lemma 4.99} Let $X$ be a non-negative random variable with $n$ different realizations $X_i$, each happens with probability $\frac{1}{n}$. Then, 
\begin{equation*}
\mathbb{E}\left[X ^{2}\right]\leq n\big(\mathbb{E}\left[X \right]\big)^{2}\,.\label{eq:42}
\end{equation*}
\end{lemma}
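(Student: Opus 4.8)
The plan is to reduce this probabilistic statement to an elementary algebraic inequality among the $n$ nonnegative realizations. Since each value $X_i$ occurs with probability $1/n$, I would first write out the two expectations explicitly as $\mathbb{E}[X] = \frac{1}{n}\sum_{i=1}^n X_i$ and $\mathbb{E}[X^2] = \frac{1}{n}\sum_{i=1}^n X_i^2$. Substituting these into the claimed bound $\mathbb{E}[X^2]\leq n(\mathbb{E}[X])^2$ and clearing the common factor $1/n$ from both sides, the statement becomes equivalent to showing $\sum_{i=1}^n X_i^2 \leq \bigl(\sum_{i=1}^n X_i\bigr)^2$.

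Next I would expand the right-hand side as $\bigl(\sum_{i=1}^n X_i\bigr)^2 = \sum_{i=1}^n X_i^2 + 2\sum_{i<j} X_i X_j$. Subtracting the common term $\sum_{i=1}^n X_i^2$ from both sides, the inequality collapses to $0 \leq 2\sum_{i<j} X_i X_j$, which holds term by term because every product $X_i X_j$ is nonnegative under the hypothesis that $X$ is a nonnegative random variable, so that $X_i \geq 0$ for all $i$. This closes the argument.

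The argument is essentially immediate once the definitions are unfolded, so I do not expect a serious obstacle; the one point deserving care is the genuine use of the nonnegativity assumption to control the sign of the cross terms. Without it the inequality can fail (for instance $X_1 = 1$, $X_2 = -1$ gives $\sum_i X_i^2 = 2 > 0 = (\sum_i X_i)^2$), so the hypothesis $X \geq 0$ is not cosmetic. It is worth noting that this is precisely the reverse direction of the Cauchy--Schwarz bound $(\sum_i X_i)^2 \leq n\sum_i X_i^2$, and the factor of $n$ appearing on the right of the lemma is exactly what that companion inequality suggests.
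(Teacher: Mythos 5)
Your proof is correct and follows essentially the same route as the paper: both reduce the claim to the elementary inequality $\sum_{i=1}^n X_i^2 \leq \bigl(\sum_{i=1}^n X_i\bigr)^2$ for nonnegative $X_i$ and then unfold the expectations. You actually justify that inequality by expanding the square and using nonnegativity of the cross terms, whereas the paper simply asserts it, so your write-up is if anything slightly more complete.
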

\begin{proof}
Note that since $X_i$ is non-negative, we have $\sum_{i=1}^{n}X_{i}{}^{2}\leq\left(\sum_{i=1}^{n}X_{i}\right)^{2}$. Therefore,
\begin{equation*}
\begin{aligned}
\mathbb{E}\left[X^{2}\right]=\frac{1}{n}\sum_{i=1}^{n}X_{i}^{2}\leq\frac{1}{n}\left(\sum_{i=1}^{n}X_{i}\right)^{2}=n\left(\frac{1}{n}\sum_{i=1}^{n}X_{i}\right)^{2}=n\big(\mathbb{E}\left[X\right]\big)^{2},\label{eq:43}
\end{aligned}
\end{equation*} where the last equality follows from the definition of Euclidean norm.
\end{proof}

\begin{lemma}\label{lemma:newexpectnorm}
Consider the random activation matrix $\Phi(t)$, $D(t-1)$, and $B$ defined in Eq. (\ref{eq:15}) and (\ref{eq:defDB}) and recall the definition of $\rho$ from Lemma \ref{proposition 2}, then considering the history of the system upto time $t$, for any $y\in\mathbb{R}^n$ we have
\begin{equation*}
\mathbb{E}\left[\Big\Vert \Big(I-\varepsilon\Phi(t)+\varepsilon\Phi(t)Q(t-1)\Big)y\Big\Vert\Big|\mathcal{F}_{t-1}\right]\leq\Gamma_2 \Vert y\Vert.\label{eq:expectbound}
\end{equation*}
where $Q(t-1)=\big(D(t-1)^{-1/2}BD(t-1)^{-1/2}\big)^2$ and $\Gamma_2=\Big(\frac{n-1+\big(1-\varepsilon+\varepsilon\rho^2\big)^2}{n}\Big)^{1/2}<1$.
\end{lemma}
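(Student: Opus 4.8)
The plan is to exploit the fact that the activation matrix $\Phi(t)$ selects a single coordinate, so conditioned on $\mathcal{F}_{t-1}$ the random operator $I-\varepsilon\Phi(t)+\varepsilon\Phi(t)Q(t-1)$ takes one of $n$ equally likely forms, and the expectation is simply the average over which agent is active. First I would observe that, given $\mathcal{F}_{t-1}$, both $Q(t-1)$ and $y$ are deterministic, so the only randomness is in the choice of active coordinate. When agent $i$ is active, $\Phi(t)=e_ie_i'$, and the operator acts as the identity on all coordinates except through the $i$-th row contributed by $\varepsilon\Phi(t)(Q(t-1)-I)$. I would write out $\big(I-\varepsilon\Phi(t)+\varepsilon\Phi(t)Q(t-1)\big)y$ explicitly for each realization $i$, separating the term $(1-\varepsilon)y_i$ on the active coordinate plus the $i$-th row of $\varepsilon Q(t-1)y$, while the remaining $n-1$ coordinates equal $y_j$ unchanged.

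Next I would compute the conditional expectation of the norm by averaging these $n$ realizations with weight $1/n$ each. Because $\Phi(t)$ only modifies one coordinate, the squared norm of each realization equals $\sum_{j\neq i}y_j^2$ plus the square of the modified $i$-th entry, namely $\big((1-\varepsilon)y_i+\varepsilon[Q(t-1)y]_i\big)^2$. Summing over $i$ and dividing by $n$ gives an expression involving $\frac{n-1}{n}\Vert y\Vert^2$ (since each $y_j^2$ is omitted exactly once) plus $\frac{1}{n}\sum_i\big((1-\varepsilon)y_i+\varepsilon[Q(t-1)y]_i\big)^2$. The latter sum I would recognize as $\frac{1}{n}\big\Vert(1-\varepsilon)y+\varepsilon Q(t-1)y\big\Vert^2 = \frac{1}{n}\big\Vert\big((1-\varepsilon)I+\varepsilon Q(t-1)\big)y\big\Vert^2$. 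To bound this, I would invoke Lemma \ref{proposition 2}, which gives $0\preceq D(t-1)^{-1/2}BD(t-1)^{-1/2}\preceq\rho I$ with $\rho<1$; squaring, $Q(t-1)=\big(D(t-1)^{-1/2}BD(t-1)^{-1/2}\big)^2$ satisfies $0\preceq Q(t-1)\preceq\rho^2 I$, so $(1-\varepsilon)I+\varepsilon Q(t-1)$ has operator norm at most $1-\varepsilon+\varepsilon\rho^2$ (using $0<\varepsilon<1$ so the convex combination stays nonnegative and below $1$).

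Putting these together, the conditional expectation of the squared norm is at most $\frac{n-1}{n}\Vert y\Vert^2+\frac{1}{n}(1-\varepsilon+\varepsilon\rho^2)^2\Vert y\Vert^2=\Gamma_2^2\Vert y\Vert^2$, which matches the claimed $\Gamma_2$. The one subtlety is that the lemma bounds $\mathbb{E}[\Vert\cdot\Vert]$, not $\mathbb{E}[\Vert\cdot\Vert^2]$, so the final step is to apply Jensen's inequality, $\mathbb{E}[\Vert\cdot\Vert]\leq\big(\mathbb{E}[\Vert\cdot\Vert^2]\big)^{1/2}\leq\Gamma_2\Vert y\Vert$. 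Finally I would verify $\Gamma_2<1$: since $\rho<1$ and $0<\varepsilon<1$ we have $1-\varepsilon+\varepsilon\rho^2=1-\varepsilon(1-\rho^2)<1$, so its square is strictly below $1$, and the convex combination $\frac{n-1}{n}\cdot 1+\frac{1}{n}(1-\varepsilon+\varepsilon\rho^2)^2$ is strictly less than $1$.

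The main obstacle I anticipate is carrying out the coordinate-wise decomposition of the norm cleanly: one must correctly track that the active coordinate contributes the full modified entry (involving the entire $i$-th row of $Q(t-1)$ acting on $y$), while recombining the $n$ separate active-coordinate squares back into a single operator norm $\big\Vert\big((1-\varepsilon)I+\varepsilon Q(t-1)\big)y\big\Vert^2$. Getting the bookkeeping of which coordinates are perturbed versus untouched exactly right — so that the $\frac{n-1}{n}$ and $\frac{1}{n}$ weights emerge correctly — is where the argument is most error-prone, though each individual step is elementary.
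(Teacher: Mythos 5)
Your proposal is correct and follows essentially the same route as the paper's proof: the coordinate-wise case analysis over which agent is active, the resulting identity $\frac{n-1}{n}\Vert y\Vert^2+\frac{1}{n}\Vert((1-\varepsilon)I+\varepsilon Q(t-1))y\Vert^2$ for the conditional second moment, the bound $\Vert((1-\varepsilon)I+\varepsilon Q(t-1))y\Vert\leq(1-\varepsilon+\varepsilon\rho^2)\Vert y\Vert$ via Lemma \ref{proposition 2}, and Jensen's inequality to pass from the second moment to the first. The only cosmetic difference is that you bound the operator norm spectrally from $0\preceq Q(t-1)\preceq\rho^2 I$ where the paper uses the triangle inequality, and you additionally spell out the (straightforward) check that $\Gamma_2<1$.
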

\begin{proof}  
Note that for each realization of the activation matrix, denoted by $\Phi^i$, we have
\begin{equation*}
\Big[\Big(I-\varepsilon\Phi^i(t)+\varepsilon\Phi^i(t)Q(t-1)\Big)y\Big]_j=\begin{cases}
\Big[\big(I-\varepsilon I+\varepsilon Q(t-1)\big)y\Big]_i & \mbox{if $j=i$},\\
y_j & \mbox{otherwise}.
\end{cases}
\end{equation*}
Hence,
\begin{align*}
\Big\Vert\Big(I-\varepsilon\Phi^i(t)+\varepsilon\Phi^i(t)Q(t-1)\Big)y\Big\Vert^2=\Big[\Big(I-\varepsilon I+\varepsilon Q(t-1)\Big)y\Big]_i^2+\big\Vert
y_{-i}\big\Vert^2,
\end{align*}where $y_{-i}$ is a vector with a zero at $i'th$ element and the rest of its elements are the same as vector $y$.
Taking expectation over all possible realizations of matrix $\Phi(t)$, we obtain
\begin{align*}
&\mathbb{E}\left[\Big\Vert \Big(I-\varepsilon\Phi(t)+\varepsilon\Phi(t)Q(t-1)\Big)y\Big\Vert^2\Big|\mathcal{F}_{t-1}\right]=\sum_{i=1}^n\frac{1}{n}\Big\Vert\Big(I-\varepsilon\Phi^i(t)+\varepsilon\Phi^i(t)Q(t-1)\Big)y\Big\Vert^2\\=&\sum_{i=1}^n\frac{1}{n}\Big(\Big[\Big(I-\varepsilon I+\varepsilon Q(t-1)\Big)y\Big]_i^2+\big\Vert
y_{-i}\big\Vert^2\Big)=\frac{1}{n}\Big\Vert\Big(I-\varepsilon I+\varepsilon Q(t-1)\Big)y\Big\Vert^2+\frac{n-1}{n}\big\Vert
y\big\Vert^2.
\end{align*}
Using triangular inequality,  the result of Lemma \ref{proposition 2} to bound $\big\Vert Q(t-1)\big\Vert$, and the fact that $\varepsilon\leq 1$, we have \[\Big\Vert\Big(I-\varepsilon I+\varepsilon Q(t-1)\Big)y\Big\Vert\leq(1-\varepsilon)\big\Vert y\big\Vert+\varepsilon\rho^2\big\Vert y\big\Vert.\]
Therefore,\[\Big\Vert\Big(I-\varepsilon I+\varepsilon Q(t-1)\Big)y\Big\Vert^2\leq\big(1-\varepsilon+\varepsilon\rho^2\big)^2\big\Vert y\big\Vert^2.\]
We now use Jensen's inequality for expectations to obtain
\[\Bigg(\mathbb{E}\Big[\Big\Vert \Big(I-\varepsilon\Phi(t)+\varepsilon\Phi(t)Q(t-1)\Big)y\Big\Vert\Big|\mathcal{F}_{t-1}\Big]\Bigg)^2  \leq\mathbb{E}\left[\Big\Vert \Big(I-\varepsilon\Phi(t)+\varepsilon\Phi(t)Q(t-1)\Big)y\Big\Vert^2\Big|\mathcal{F}_{t-1}\right]\leq\frac{n-1+\big(1-\varepsilon+\varepsilon\rho^2\big)^2}{n}\big\Vert y\big\Vert^2.\]
\end{proof}
\begin{lemma} \label{recursion} Consider the asynchronous network Newton algorithm as in Algorithm \ref{async NN}, and remember the definition of $D(t-1)$ and $B$, then for any $y\in\mathbb{R}^n$ we have
\[\big\Vert D(t-1)^{1/2}y\big\Vert\leq\Big(1+C_1\big\Vert g(t-2)\big\Vert^{1/2}\Big)\big\Vert D(t-2)^{1/2}y\big\Vert,\]
where $C_1=\Big(\frac{\varepsilon\alpha L \Lambda}{2(1-\Delta)+\alpha m}\Big)^{1/2}$.
\end{lemma}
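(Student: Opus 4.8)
The plan is to control the difference between the two quadratic forms $\big\Vert D(t-1)^{1/2}y\big\Vert^2 = y'D(t-1)y$ and $\big\Vert D(t-2)^{1/2}y\big\Vert^2 = y'D(t-2)y$. The essential observation is that the constant term $2(I-W_d)$ in the definition of $D$ [c.f.\ Eq.\ \eqref{eq:defDB}] does not depend on $t$, so it cancels and leaves $D(t-1)-D(t-2)=\alpha\big(G(t-1)-G(t-2)\big)$, a diagonal matrix whose entries are the differences $\nabla^2 f_i(x_i(t-1))-\nabla^2 f_i(x_i(t-2))$. Thus I would write $y'D(t-1)y = y'D(t-2)y + \alpha\, y'\big(G(t-1)-G(t-2)\big)y$ and bound the last term by $\alpha\big\Vert G(t-1)-G(t-2)\big\Vert\,\big\Vert y\big\Vert^2$.

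Two ingredients then finish the estimate. First, since only the active agent moves from iteration $t-2$ to $t-1$, the diagonal matrix $G(t-1)-G(t-2)$ has a single nonzero entry, and Assumption \ref{assm:LipHessian} gives $\big\Vert G(t-1)-G(t-2)\big\Vert \leq L\big\Vert x(t-1)-x(t-2)\big\Vert$. Second, I would bound the step length using the update rule \eqref{eq:xUpdateAsync}: writing $x(t-1)-x(t-2)=-\varepsilon\Phi(t-1)\hat{H}(t-2)^{-1}g(t-2)$ and using that the activation matrix $\Phi$ is a coordinate projection (hence nonexpansive) together with the upper bound $\hat{H}(t-2)^{-1}\preceq\Lambda I$ from Lemma \ref{lemma:lemma 4.2}, one gets $\big\Vert x(t-1)-x(t-2)\big\Vert \leq \varepsilon\Lambda\big\Vert g(t-2)\big\Vert$. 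To convert the free factor $\big\Vert y\big\Vert^2$ into $\big\Vert D(t-2)^{1/2}y\big\Vert^2$, I would invoke the lower bound $D(t-2)\succeq (2(1-\Delta)+\alpha m)I$ from Lemma \ref{lemma 1}, which yields $\big\Vert y\big\Vert^2\leq \frac{1}{2(1-\Delta)+\alpha m}\big\Vert D(t-2)^{1/2}y\big\Vert^2$.

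Collecting these gives $\big\Vert D(t-1)^{1/2}y\big\Vert^2 \leq \big(1 + C_1^2\big\Vert g(t-2)\big\Vert\big)\big\Vert D(t-2)^{1/2}y\big\Vert^2$ with $C_1^2 = \frac{\varepsilon\alpha L\Lambda}{2(1-\Delta)+\alpha m}$. The final step is to take square roots and apply the elementary inequality $\sqrt{1+a}\leq 1+\sqrt{a}$ (valid for $a\geq0$, since $(1+\sqrt a)^2 = 1+2\sqrt a + a\geq 1+a$), which turns the additive perturbation inside the square root into the multiplicative factor $1+C_1\big\Vert g(t-2)\big\Vert^{1/2}$ claimed in the statement. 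I expect no deep obstacle here; the only points requiring care are recognizing that the constant part of $D$ cancels so that the Lipschitz assumption applies directly to the Hessian differences, and choosing to rewrite $\big\Vert y\big\Vert$ via the lower eigenvalue bound on $D(t-2)$ so that the estimate is expressed in the $D^{1/2}$-weighted norm rather than the plain Euclidean norm — the latter is precisely what makes this lemma chainable in the subsequent quadratic-rate argument.
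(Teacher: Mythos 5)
Your proposal is correct and follows essentially the same route as the paper's proof: bound the difference of the quadratic forms $y'D(t-1)y - y'D(t-2)y$ via the Lipschitz Hessian (the constant part of $D$ cancels, exactly as the paper exploits through $D(t-1)-D(t-2)=H(t-1)-H(t-2)$), bound the step length by $\varepsilon\Lambda\Vert g(t-2)\Vert$ using $\Vert\Phi\Vert=1$ and Lemma \ref{lemma:lemma 4.2}, convert $\Vert y\Vert$ to the $D(t-2)^{1/2}$-weighted norm via the lower eigenvalue bound, and finish with subadditivity of the square root. The only differences are cosmetic (you substitute all bounds before taking the square root via $\sqrt{1+a}\leq 1+\sqrt{a}$, whereas the paper takes square roots first via $\sqrt{a+b}\leq\sqrt{a}+\sqrt{b}$, and your version dispenses with the paper's case split on whether $\Vert D(t-1)^{1/2}y\Vert$ exceeds $\Vert D(t-2)^{1/2}y\Vert$).
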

\begin{proof}
We note that if $\big\Vert D(t-1)^{1/2}y\big\Vert\leq \big\Vert D(t-2)^{1/2}y\big\Vert$, the claim is true because $C_1>0$. Therefore, we consider the case with $\big\Vert D(t-1)^{1/2}y\big\Vert> \big\Vert D(t-2)^{1/2}y\big\Vert$. We next use the Lipschitz property of the Hessian to obtain 
\begin{equation}\big\Vert D(t-1)-D(t-2)\big\Vert=\big\Vert H(t-1)-H(t-2)\big\Vert\leq\alpha L\big\Vert x(t-1)-x(t-2)\big\Vert.  \label{eq:recursion}\end{equation}
where $\alpha L$ is the Lipschitz constant of the Hessian matrix according to the result of Lemma \ref{lemma 01}. We also note that \[\Big\vert y^\prime D(t-1)y - y^\prime D(t-2)y \Big\vert=\big\vert y^{\prime} \big(D(t-1)-D(t-2)\big)y\big\vert\leq\alpha L\big\Vert x(t-1)-x(t-2)\big\Vert\big\Vert y\big\Vert^2.\]
Note that $y^\prime D(t-1)y =\big\Vert D(t-1)^{1/2}y\big\Vert^2$ and  $y^\prime D(t-2)y =\big\Vert D(t-2)^{1/2}y\big\Vert^2$, then using Cauchy Schwarz inequality together with the fact that $\big\Vert D(t-1)^{1/2}y\big\Vert> \big\Vert D(t-2)^{1/2}y\big\Vert$, we have
\begin{equation*}\big\Vert D(t-1)^{-1}y\big\Vert^2\leq\big\Vert D(t-2)^{1/2}y \big\Vert^2+\alpha L\big\Vert x(t-1)-x(t-2)\big\Vert\big\Vert y\big\Vert^2. \end{equation*}
Therefore, 
\begin{equation}\big\Vert D(t-1)^{1/2}y\big\Vert\leq\big\Vert D(t-2)^{1/2}y \big\Vert+\Big(\alpha L\big\Vert x(t-1)-x(t-2)\big\Vert\Big)^{1/2}\big\Vert y\big\Vert.\label{recurbound} \end{equation}
In this step we find an upper bound for $\Vert y\Vert$ in terms of $\Vert D(t-2)^{1/2}y\Vert$. We note that \begin{align*}\mu_{min}\big(D(t-2)^{1/2}\big)\Vert y\Vert\leq\big\Vert D(t-2)^{1/2}y\big\Vert,\end{align*} where $\mu_{min}\big(D(t-2)^{1/2}\big)$ is the minimum eigenvalue of matrix $D(t-2)^{1/2}$. Hence,
\begin{align}\Vert y\Vert\leq \frac{1}{\mu_{min}\big(D(t-2)^{1/2}\big)}\Vert D(t-2)^{1/2}y\Vert\leq \frac{1}{\big(2(1-\Delta)+\alpha m\big)^{1/2}}\big\Vert D(t-2)^{1/2}y\big\Vert.\label{boundNorm}\end{align}
We next combine relations (\ref{recurbound}) and (\ref{boundNorm}) to obtain 
\begin{align*}
\big\Vert D(t-1)^{1/2}y\big\Vert\leq\Bigg(1+\Bigg(\frac{\alpha L\big\Vert x(t-1)-x(t-2)\big\Vert}{2(1-\Delta)+\alpha m}\Bigg)^{1/2}\Bigg)\big\Vert D(t-2)^{1/2}y\big\Vert
\end{align*}
Finally, we use the asynchronous network Newton iteration defined in Eq. (\ref{eq:xUpdateAsync}) to substitute $x(t-1)-x(t-2)$ with $-\varepsilon\Phi(t-1)\hat{H}(t-2)^{-1}g(t-2)$ to get \[\big\Vert D(t-1)^{1/2}y\big\Vert\leq\Bigg(1+\Big(\frac{\varepsilon\alpha L}{2(1-\Delta)+\alpha m}\Big)^{1/2}\big\Vert\Phi(t-1)\hat{H}(t-2)^{-1}g(t-2)\big\Vert^{1/2}\Bigg)\big\Vert D(t-2)^{-1}y\big\Vert.\]
Cauchy Schwarz inequality and the facts that $\big\Vert\hat{H}(t-2)^{-1}\big\Vert\leq\Lambda$ [c.f. Lemma \ref{lemma:lemma 4.2}] and $\big\Vert\Phi(t-1)\big\Vert=1$ for all realizations, complete the proof.
\end{proof}
\begin{lemma} \label{lemma:lemma 4.11} Consider the asynchronous network Newton algorithm as in Algorithm \ref{async NN} with the positive stepsize of $\varepsilon\leq 1$,and recall the definition of $0<\beta<1$ from Theorem \ref{linconv} , $\rho<1$ from Lemma \ref{proposition 2}, $C_1$ from Lemma \ref{recursion}, and $\Gamma_2$ from Lemma \ref{lemma:newexpectnorm}, then the sequence of the iterate errors $\left\lbrace x(t)-x^*\right\rbrace$ satisfies
\begin{equation*}
\begin{aligned}
\mathbb{E}\Big[\big\Vert D(t-1)^{1/2}\big(x(t)-x^*\big)\big\Vert\Big]\leq \Gamma_1\Bigg(\mathbb{E}\Big[\big\Vert D(t-2)^{1/2}\big(x(t-1)-x^*\big)\big\Vert\Big]\Bigg)^2+\Gamma(t)\mathbb{E}\Big[\big\Vert D(t-2)^{1/2}\big(x(t-1)-x^*\big)\big\Vert\Big], 
\end{aligned}
\end{equation*}
\noindent where $\Gamma_{1}=\frac{n\big(2(1-\delta)+\alpha M\big)^{1/2}\alpha L\varepsilon \Lambda}{2\big(2(1-\Delta)+\alpha m\big)}$ and $\Gamma(t)=\Gamma_2\Big(1+C_2(1-\beta)^{\frac{t-2}{4}}\Big)$ with \[C_2=C_1 \Big(\frac{2n^2}{\lambda}\big(F(x(0))-F^*\big)\Big)^{1/4}\]
\end{lemma}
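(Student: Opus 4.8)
The plan is to derive a one-step recursion for the weighted error $D(t-1)^{1/2}(x(t)-x^*)$, take expectations, and assemble the stated bound from the preceding lemmas. First I would convert the update \eqref{eq:xUpdateAsync} into an error recursion. Since $g(x^*)=0$, applying Lemma \ref{lemma:lemma 4.8} to $F$ (whose Hessian is $\alpha L$-Lipschitz by Lemma \ref{lemma 01}) gives $g(t-1)=H(t-1)(x(t-1)-x^*)+e(t-1)$ with $\|e(t-1)\|\le\frac{\alpha L}{2}\|x(t-1)-x^*\|^2$. Substituting this into \eqref{eq:xUpdateAsync} yields
\[x(t)-x^*=\big(I-\varepsilon\Phi(t)\hat H(t-1)^{-1}H(t-1)\big)(x(t-1)-x^*)-\varepsilon\Phi(t)\hat H(t-1)^{-1}e(t-1).\]
Then Lemma \ref{error}, which gives $I-\hat H(t-1)^{-1}H(t-1)=D(t-1)^{-1/2}Q(t-1)D(t-1)^{1/2}$, together with the fact that the diagonal matrices $\Phi(t)$ and $D(t-1)^{\pm1/2}$ commute, lets me left-multiply by $D(t-1)^{1/2}$ and collapse the first term into exactly the operator appearing in Lemma \ref{lemma:newexpectnorm}:
\[D(t-1)^{1/2}(x(t)-x^*)=\big(I-\varepsilon\Phi(t)+\varepsilon\Phi(t)Q(t-1)\big)D(t-1)^{1/2}(x(t-1)-x^*)-\varepsilon\Phi(t)D(t-1)^{1/2}\hat H(t-1)^{-1}e(t-1).\]

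Next I would take norms, use the triangle inequality, and condition on $\mathcal F_{t-1}$. The first term is controlled directly by Lemma \ref{lemma:newexpectnorm} with $y=D(t-1)^{1/2}(x(t-1)-x^*)$, contributing the contraction factor $\Gamma_2\|D(t-1)^{1/2}(x(t-1)-x^*)\|$. The second term I would bound realization by realization using $\|\Phi(t)\|=1$, $\|D(t-1)^{1/2}\|\le(2(1-\delta)+\alpha M)^{1/2}$ (Lemma \ref{lemma 1}), $\|\hat H(t-1)^{-1}\|\le\Lambda$ (Lemma \ref{lemma:lemma 4.2}), and the quadratic bound on $\|e(t-1)\|$; converting $\|x(t-1)-x^*\|^2$ into $\|D(t-2)^{1/2}(x(t-1)-x^*)\|^2$ through $D(t-2)\succeq(2(1-\Delta)+\alpha m)I$ produces the coefficient $\kappa=\Gamma_1/n$. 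Applying Lemma \ref{recursion} to re-express $\|D(t-1)^{1/2}(x(t-1)-x^*)\|$ in terms of $\|D(t-2)^{1/2}(x(t-1)-x^*)\|$ introduces the factor $(1+C_1\|g(t-2)\|^{1/2})$ on the linear term.

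Finally I would pass to full expectations. Conditioning on $\mathcal F_{t-2}$ leaves $x(t-1)$ as the only remaining randomness (governed by $\Phi(t-1)$ with $n$ equally likely outcomes), so Lemma \ref{lemma:lemma 4.99} turns $\mathbb E[\|D(t-2)^{1/2}(x(t-1)-x^*)\|^2\,|\,\mathcal F_{t-2}]$ into $n(\mathbb E[\cdot\,|\,\mathcal F_{t-2}])^2$, producing the quadratic coefficient $\Gamma_1=n\kappa$. The factor $C_1\|g(t-2)\|^{1/2}$ is $\mathcal F_{t-2}$-measurable, and combining Lemma \ref{functionbound} (which gives $\|g(t-2)\|^2\le\frac{2}{\lambda}(F(x(t-2))-F^*)$), concavity of $s\mapsto s^{1/4}$, and the global linear rate of Theorem \ref{linconv} ($\mathbb E[F(x(t-2))-F^*]\le(1-\beta)^{t-2}(F(x(0))-F^*)$) bounds its expected contribution by a multiple of $(1-\beta)^{(t-2)/4}$, yielding the constant $C_2$ and the time-varying factor $\Gamma(t)=\Gamma_2(1+C_2(1-\beta)^{(t-2)/4})$.

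I expect the main obstacle to be the expectation bookkeeping in this last step. Lemma \ref{lemma:lemma 4.99} only reverses Jensen's inequality at the conditional level, whereas the outer expectation of a conditional square runs Jensen in the opposite direction, so the order in which one conditions, applies the reverse-Jensen bound, invokes the linear rate to tame the correlated factor $\|g(t-2)\|^{1/2}$, and then strips the conditioning must be arranged with care. This is precisely where the extra powers of $n$ are absorbed — the single $n$ in $\Gamma_1$ from the reverse-Jensen step and the $n^2$ buried inside $C_2$ from controlling the correlated gradient factor — and getting these constants to line up is the delicate part of the argument.
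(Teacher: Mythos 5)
Your proposal follows essentially the same route as the paper's proof: the same error decomposition (your explicit Taylor-remainder form of $g(t-1)$ is just a tidier rewriting of the paper's add-and-subtract of $\varepsilon\Phi(t)\hat H(t-1)^{-1}H(t-1)(x(t-1)-x^*)$ and $\varepsilon\Phi(t)(x(t-1)-x^*)$), followed by the same sequence of invocations of Lemma \ref{error}, Lemma \ref{lemma:newexpectnorm}, Lemma \ref{recursion}, the Cauchy--Schwarz/Jensen treatment of the $\Vert g(t-2)\Vert^{1/2}$ factor via Theorem \ref{linconv}, and Lemma \ref{lemma:lemma 4.99} to produce $\Gamma_1$, $\Gamma_2$, and $C_2$. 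The expectation-bookkeeping worry you flag at the end is legitimate, but note that the paper does not resolve it either --- it simply applies Lemma \ref{lemma:lemma 4.99} to the unconditional expectation of $\big\Vert D(t-2)^{1/2}(x(t-1)-x^*)\big\Vert$, whose randomness comes from the whole history rather than from $n$ equally likely outcomes --- so your argument is no weaker than the paper's at that point.
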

\begin{proof} 
By adding and subtracting $x(t-1)$ and $\varepsilon\Phi(t)\hat{H}(t-1)^{-1}H(t-1)\left(x(t-1)-x^*\right)$ from $x(t)-x^*$ we have
\begin{equation*}
\begin{aligned}
&x(t)-x^*=x(t)-x(t-1)+\varepsilon\Phi(t)\hat{H}(t-1)^{-1}H(t-1)\left(x(t-1)-x^*\right)+x(t-1)-x^*\\&-\varepsilon\Phi(t)\hat{H}(t-1)^{-1}H(t-1)\left(x(t-1)-x^*\right).
\end{aligned}
\end{equation*}
We next substitute $x(t)-x(t-1)$  with $-\varepsilon \Phi(t)\hat{H}(t-1)^{-1}g(t-1)$ and add and subtract $\varepsilon\Phi(t)\big(x(t-1)-x^*\big)$ and obtain
\begin{align*} &x(t)-x^*=\varepsilon\Phi(t)\hat{H}(t-1)^{-1}\Big(H(t-1)\big(x(t-1)-x^*\big)-g(t-1)\Big)+\big(I-\varepsilon\Phi(t)\big)\big(x(t-1)-x^*\big)\\&+\varepsilon\Phi(t)\big(I-\hat{H}(t-1)^{-1}H(t-1)\big)\big(x(t-1)-x^*\big).\end{align*}
Multiplying both sides of the previous equality by the diagonal matrix $D(t-1)^{1/2}$ and using the result of Lemma \ref{error} to substitute $D(t-1)^{1/2}\big(I-\hat{H}(t-1)^{-1}H(t-1)\big)$ with $\big(D(t-1)^{-1/2}BD(t-1)^{-1/2}\big)^2D(t-1)^{1/2}$, we have
\begin{align*}&D(t-1)^{1/2}\big(x(t)-x^*\big)=\varepsilon D(t-1)^{1/2}\Phi(t)\hat{H}(t-1)^{-1}\Big(H(t-1)\big(x(t-1)-x^*\big)-g(t-1)\Big)+\\&\big(I-\varepsilon\Phi(t)\big)D(t-1)^{1/2}\big(x(t-1)-x^*\big)+\varepsilon\Phi(t)\big(D(t-1)^{-1/2}BD(t-1)^{-1/2}\big)^2D(t-1)^{1/2}\big(x(t-1)-x^*\big).\end{align*}
We then take norms on both sides and use triangular and Cauchy Schwarz inequalities to obtain
\begin{equation*}
\begin{aligned}
&\big\Vert D(t-1)^{1/2}\big(x(t)-x^*\big)\big\Vert\leq\varepsilon\big\Vert D(t-1)^{1/2}\Phi(t)\hat{H}(t-1)^{-1}\big\Vert\big\Vert H(t-1)\big(x(t-1)-x^*\big)-g(t-1)\big\Vert+\\&\Big\Vert\Big( I-\varepsilon\Phi(t)+\varepsilon\Phi(t)\big(D(t-1)^{-1/2}BD(t-1)^{-1/2}\big)^2\Big) D(t-1)^{1/2}\big(x(t-1)-x^*\big)\Big\Vert.
\end{aligned}
\end{equation*}
Applying the result of Lemma \ref{lemma:lemma 4.8} with $v=x^*$ and $u=x(t-1)$ and considering the fact that $\nabla F(x^*)=0$, yields 
\begin{align*}&\left\Vert H(t-1)\big(x(t-1)-x^*\big)-g(t-1)\right\Vert\leq\frac{\alpha L}{2}\left\Vert x(t-1)-x^*\right\Vert^2.\end{align*} where $\alpha L$ is the Lipschitz constant of the Hessian matrix according to the result of Lemma \ref{lemma 01}. 
Employing the above inequality together with the upper bounds on $\Vert D(t-1)^{1/2}\Vert$  and $\Vert\hat{H}(t-1)^{-1}\Vert$ [c.f. Lemma \ref{lemma 1} and Lemma \ref{lemma:lemma 4.2}], and considering the fact that for every realization of the random activation matrix we have $\Vert\Phi(t)\Vert=1$, we obtain \begin{equation*}
\begin{aligned}
&\big\Vert D(t-1)^{1/2}\big(x(t)-x^*\big)\big\Vert\leq\frac{\big(2(1-\delta)+\alpha M\big)^{1/2}\alpha L\varepsilon \Lambda}{2}\big\Vert x(t-1)-x^*\big\Vert^2+\\&\Big\Vert\Big( I-\varepsilon\Phi(t)+\varepsilon\Phi(t)\big(D(t-1)^{-1/2}BD(t-1)^{-1/2}\big)^2\Big) D(t-1)^{1/2}\big(x(t-1)-x^*\big)\Big\Vert.
\end{aligned}
\end{equation*}
This inequality holds for any random activation of the agents. We now note that conditioned on $\mathcal{F}_{t-1}$, matrix $\Phi(t)$  and $x(t)$ are random variables and $x(t-1)$ is deterministic, we can hence take expectation from both sides of the above inequality and have
\begin{equation*}
\begin{aligned}
&\mathbb{E}\Big[\big\Vert D(t-1)^{1/2}\big(x(t)-x^*\big)\big\Vert\Big|\mathcal{F}_{t-1}\Big]\leq\frac{\big(2(1-\delta)+\alpha M\big)^{1/2}\alpha L\varepsilon \Lambda}{2}\big\Vert x(t-1)-x^*\big\Vert^2+\\&\mathbb{E}\Big[\Big\Vert\Big( I-\varepsilon\Phi(t)+\varepsilon\Phi(t)\big(D(t-1)^{-1/2}BD(t-1)^{-1/2}\big)^2\Big) D(t-1)^{1/2}\big(x(t-1)-x^*\big)\Big\Vert\Big|\mathcal{F}_{t-1}\Big].
\end{aligned}
\end{equation*}
We now consider the result of Lemma \ref{lemma:newexpectnorm} with $y=D(t-1)^{1/2}\big(x(t-1)-x^*\big)$ to obtain
\begin{equation*}
\begin{aligned}
&\mathbb{E}\Big[\big\Vert D(t-1)^{1/2}\big(x(t)-x^*\big)\big\Vert\Big|\mathcal{F}_{t-1}\Big]\leq\frac{\big(2(1-\delta)+\alpha M\big)^{1/2}\alpha L\varepsilon \Lambda}{2}\big\Vert x(t-1)-x^*\big\Vert^2+\\& \Gamma_2\big\Vert D(t-1)^{1/2}\big(x(t-1)-x^*\big)\big\Vert.
\end{aligned}
\end{equation*}
We also have
\[\big\Vert x(t-1)-x^*\big\Vert\leq\frac{1}{\mu_{min}\big(D(t-2)^{1/2}\big)}\big\Vert D(t-2)^{1/2}\big(x(t-1)-x^*\big)\big\Vert,\] hence, \[\big\Vert x(t-1)-x^*\big\Vert^2\leq\frac{1}{2(1-\Delta)+\alpha m}\big\Vert D(t-2)^{1/2}\big(x(t-1)-x^*\big)\big\Vert^2\]
In order to have a recursion we use the result of Lemma \ref{recursion} to bound $\big\Vert D(t-1)^{1/2}\big(x(t-1)-x^*\big)\big\Vert$ in terms of $\big\Vert D(t-2)^{1/2}\big(x(t-1)-x^*\big)\big\Vert$ and obtain
\begin{equation}
\begin{aligned}
&\mathbb{E}\Big[\big\Vert D(t-1)^{1/2}\big(x(t)-x^*\big)\big\Vert\Big|\mathcal{F}_{t-1}\Big]\leq\frac{\big(2(1-\delta)+\alpha M\big)^{1/2}\alpha L\varepsilon \Lambda}{2\big(2(1-\Delta)+\alpha m\big)}\big\Vert D(t-2)^{1/2}\big(x(t-1)-x^*\big)\big\Vert^2+\\& \Gamma_2\Big(1+C_1\big\Vert g(t-2)\big\Vert^{1/2}\Big)\big\Vert D(t-2)^{1/2}\big(x(t-1)-x^*\big)\big\Vert. \label{eq:condexp}
\end{aligned}
\end{equation}
We now consider iteration $t$, taking expectations from both sides of Eq. (\ref{eq:condexp}) and using the law of total expectation yield 
\begin{equation}
\begin{aligned}
&\mathbb{E}\Big[\big\Vert D(t-1)^{1/2}\big(x(t)-x^*\big)\big\Vert\Big]\leq\frac{\big(2(1-\delta)+\alpha M\big)^{1/2}\alpha L\varepsilon \Lambda}{2\big(2(1-\Delta)+\alpha m\big)}\mathbb{E}\Big[\big\Vert D(t-2)^{1/2}\big(x(t-1)-x^*\big)\big\Vert^2\Big]+\\& \Gamma_2\mathbb{E}\Big[\Big(1+C_1\big\Vert g(t-2)\big\Vert^{1/2}\Big)\big\Vert D(t-2)^{1/2}\big(x(t-1)-x^*\big)\big\Vert\Big]. \label{expineq}
\end{aligned}
\end{equation}
The second expected value in the right hand side of the previous inequality is equal to
\begin{align*}\mathbb{E}\Big[\big\Vert D(t-2)^{1/2}\big(x(t-1)-x^*\big)\big\Vert\Big]+ C_1 \mathbb{E}\Big[\big\Vert g(t-2)\big\Vert^{1/2}\big\Vert D(t-2)^{1/2}\big(x(t-1)-x^*\big)\big\Vert\Big]\end{align*}
We note that the Cauchy Schwarz inequality in the context of the expectation states that for any two random variables $X$ and $Y$ such that $\mathbb{E}[X]$, $\mathbb{E}[Y]$, and $\mathbb{E}[XY]$ exist, we have 
\[\Big(\mathbb{E}[XY]\Big)^2\leq\mathbb{E}\big[X^2\big]\mathbb{E}\big[Y^2\big].\]
Therefore,
\[\mathbb{E}\Big[\big\Vert g(t-2)\big\Vert^{1/2}\big\Vert D(t-2)^{1/2}\big(x(t-1)-x^*\big)\big\Vert\Big]\leq\Bigg(\mathbb{E}\Big[\big\Vert g(t-2)\big\Vert\Big]\Bigg)^{1/2}\Bigg(\mathbb{E}\Big[\big\Vert D(t-2)^{1/2}\big(x(t-1)-x^*\big)\big\Vert^2\Big]\Bigg)^{1/2}\]
We next use the result of Lemma \ref{functionbound} on the properties of strongly convex functions together with the fact that $H(t)\preceq \big(2(1-\delta)+\alpha M\big)I=\frac{1}{\lambda}I$, to find an upper bound for $\mathbb{E}\Big[\big\Vert g(t-2)\big\Vert\Big]$ as follows
\[\big\Vert g(t-2)\big\Vert\leq\Big(\frac{2}{\lambda}\Big(F\big(x(t-2)\big)-F^{*}\Big)\Big)^{1/2}.\]
Taking expectation from both sides of the above inequality and using the Jensen's inequality for concave functions together with the linear convergence result from Theorem \ref{linconv}, we have
\[\mathbb{E}\Big[\big\Vert g(t-2)\big\Vert\Big]
\leq\mathbb{E}\Big[\Big(\frac{2}{\lambda}\Big(F\big(x(t-2)\big)-F^{*}\Big)\Big)^{1/2}\Big]\leq\Big(\frac{2}{\lambda}\Big)^{1/2}\Big(\mathbb{E}\big[F\big(x(t-2)\big)-F^{*}\big]\Big)^{1/2}\leq\Big(\frac{2}{\lambda}\Big)^{1/2}(1-\beta)^{\frac{t-2}{2}}\big(F\big(x(0)\big)-F^{*}\big)^{1/2}\]
We also note that considering the result of Lemma \ref{lemma:lemma 4.99} we have
\[\mathbb{E}\Big[\big\Vert D(t-2)^{1/2}\big(x(t-1)-x^*\big)\big\Vert^2\Big]\leq n \Bigg(\mathbb{E}\Big[\big\Vert D(t-2)^{1/2}\big(x(t-1)-x^*\big)\big\Vert\Big]\Bigg)^{2}\]
Using the three previous inequalities together with Eq. (\ref{expineq}) and applying the result of Lemma \ref{lemma:lemma 4.99} yields
\begin{equation*}
\begin{aligned}
&\mathbb{E}\Big[\big\Vert D(t-1)^{1/2}\big(x(t)-x^*\big)\big\Vert\Big]\leq \frac{n\big(2(1-\delta)+\alpha M\big)^{1/2}\alpha L\varepsilon \Lambda}{2\big(2(1-\Delta)+\alpha m\big)}\Bigg(\mathbb{E}\Big[\big\Vert D(t-2)^{1/2}\big(x(t-1)-x^*\big)\big\Vert\Big]\Bigg)^2+\\& \Gamma_2\Bigg(1+C_1\Big(\frac{2n^2(1-\beta)^{t-2}}{\lambda}\big(F(x(0))-F^*\big)\Big)^{1/4}\Bigg)\mathbb{E}\Big[\big\Vert D(t-2)^{1/2}\big(x(t-1)-x^*\big)\big\Vert\Big]. \label{expineqnew} 
\end{aligned}
\end{equation*}
\end{proof}
Before proceeding to the last theorem, we show that the sequence $\left\lbrace\big\Vert D(t-1)^{1/2}\big(x(t)-x^*\big)\big\Vert\right\rbrace$ converges linearly in expectation. 
\begin{lemma}\label{linconvX}
Consider the asynchronous network Newton iterate as in Algorithm \ref{async NN},   if the stepsize $\varepsilon$ satisfies Eq.(\ref{eq:step}), then The sequence $\left\lbrace\big\Vert D(t-1)^{1/2} \big(x(t)-x^*\big)\big\Vert\right\rbrace$ converges linearly in expectation.
\end{lemma}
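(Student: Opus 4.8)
The plan is to prove the claim directly from the global linear rate in Theorem~\ref{linconv}, rather than from the recursion of Lemma~\ref{lemma:lemma 4.11}. Write $u(t):=\mathbb{E}\big[\norm{D(t-1)^{1/2}(x(t)-x^*)}\big]$; the goal is to exhibit a constant $C$ and a rate $r<1$ with $u(t)\le C\,r^{t}$. First I would strip off the matrix weight using the uniform upper eigenvalue bound $D(t-1)\preceq(2(1-\delta)+\alpha M)I=\tfrac{1}{\lambda}I$ from Lemma~\ref{lemma 1}, which holds for every $t$ and every realization of the activations. This gives the pathwise inequality $\norm{D(t-1)^{1/2}(x(t)-x^*)}\le\lambda^{-1/2}\norm{x(t)-x^*}$.

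Next I would relate $\norm{x(t)-x^*}$ to the function suboptimality. Since $H(c)\succeq\alpha m I$ for all $c$ by Lemma~\ref{lemma 1}, the objective $F$ is $\alpha m$-strongly convex; applying Taylor's theorem about $x^*$ and using $\nabla F(x^*)=0$ (exactly as in the proof of Theorem~\ref{theorem:theorem 4.4}) yields $F(x(t))-F^*\ge\frac{\alpha m}{2}\norm{x(t)-x^*}^2$, hence the pathwise bound $\norm{x(t)-x^*}\le\sqrt{\tfrac{2}{\alpha m}\big(F(x(t))-F^*\big)}$.

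Combining the two pathwise bounds and taking expectations, I would then push the expectation through the square root with Jensen's inequality in its concave direction, $\mathbb{E}[\sqrt{Z}]\le\sqrt{\mathbb{E}[Z]}$, giving
\[
u(t)\le\lambda^{-1/2}\sqrt{\tfrac{2}{\alpha m}}\;\sqrt{\mathbb{E}\big[F(x(t))-F^*\big]}.
\]
Finally I would invoke the global linear rate $\mathbb{E}[F(x(t))-F^*]\le(1-\beta)^{t}\big(F(x(0))-F^*\big)$ from Theorem~\ref{linconv} to conclude
\[
u(t)\le\lambda^{-1/2}\sqrt{\tfrac{2}{\alpha m}\big(F(x(0))-F^*\big)}\;(1-\beta)^{t/2},
\]
which is linear convergence in expectation with rate $(1-\beta)^{1/2}<1$, since $0<\beta<1$ by Theorem~\ref{linconv}.

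I do not expect a genuine obstacle here: the argument is a short chain of norm estimates feeding into Theorem~\ref{linconv}. The only points requiring care are (i) that the eigenvalue bound on $D(t-1)$ must be used in its realization-independent form, so that the first inequality holds pathwise before any expectation is taken, and (ii) that Jensen's inequality is applied in the correct (concave) direction for $\sqrt{\cdot}$. It is worth noting that the recursion of Lemma~\ref{lemma:lemma 4.11} offers an alternative route, but it is more delicate because the factor $\Gamma(t)=\Gamma_2\big(1+C_2(1-\beta)^{(t-2)/4}\big)$ can exceed $1$ for small $t$; the direct argument above sidesteps this transient entirely.
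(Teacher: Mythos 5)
Your argument is correct and is essentially the paper's own proof: both use the strong convexity bound $\norm{x(t)-x^*}^2\le\frac{2}{\alpha m}(F(x(t))-F^*)$, the uniform eigenvalue bound $D(t-1)\preceq\frac{1}{\lambda}I$, Jensen's inequality, and the global linear rate of Theorem~\ref{linconv}, arriving at the same constant and the same rate $(1-\beta)^{t/2}$. The only cosmetic difference is that the paper bounds the squared quantity throughout and applies Jensen at the end, whereas you take square roots pathwise and push the expectation through with Jensen; these are the same inequality applied in a different order.
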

\begin{proof}
Using the Taylor's theorem and the the strong convexity of the objective function $F(x(t))$, we have 
\[F(x(t))\geq F^*+g(x^*)(x(t)-x^*)+\frac{\alpha m}{2}\big\Vert x(t)-x^*\big\Vert^2,\]where $\alpha m$ is the lower bound on the eigenvalues of $H(t)$, [c.f. Lemma \ref{lemma 1}].  We note that $g(x^*)=0$, therefore \[\big\Vert x(t)-x^*\big\Vert^2\leq\frac{2}{\alpha m}\big(F(x(t))-F^*)\big).\] Multiplying both sides by $\big\Vert D(t-1)^{1/2}\big\Vert$ and using the Cauchy Schwarz inequality, we obtain \[\big\Vert D(t-1)^{1/2}\big(x(t)-x^*\big)\big\Vert^2\leq\big\Vert D(t-1)^{1/2}\big\Vert^2\big\Vert x(t)-x^*\big\Vert^2\leq\frac{2\big(2(1-\delta)+\alpha M \big)}{\alpha m}\big(F(x(t))-F^*\big).\]
We next take expectation from both sides of the previous inequality and apply the result of Lemma \ref{linconv} to obtain
\[\mathbb{E}\Big[\big\Vert D(t-1)^{1/2}\big(x(t)-x^*\big)\big\Vert^2\Big]\leq\frac{2\big(2(1-\delta)+\alpha M \big)}{\alpha m}\mathbb{E}\Big[\big(F(x(t))-F^*)\big)\Big]\leq\frac{2\big(2(1-\delta)+\alpha M \big)(1-\beta)^t}{\alpha m}\big(F(x(0))-F^*\big).\] Employing the Jensen's inequality for expectations yields \[\Bigg(\mathbb{E}\Big[\big\Vert D(t-1)^{1/2}\big(x(t)-x^*\big)\big\Vert\Big]\Bigg)^2\leq\mathbb{E}\Big[\big\Vert D(t-1)^{1/2}\big(x(t)-x^*\big)\big\Vert^2\Big]\leq\frac{2\big(2(1-\delta)+\alpha M \big)(1-\beta)^t}{\alpha m}\big(F(x(0))-F^*\big).\] Taking square root from both sides of the previous relation completes the proof.

\end{proof}
\begin{theorem}\label{localquad}
Consider the asynchronous network Newton iterate as in Algorithm \ref{async NN} and recall the definition of $\Gamma_1$ and $\Gamma(t)$ from Lemma \ref{lemma:lemma 4.11}, then for all $t$ with
 \begin{align}t> \frac{4\ln{\frac{1-\Gamma_2}{C_2\Gamma_2}}}{\ln{(1-\beta)}}+2, \label{tbound}\end{align}
there exists  $0<\theta<\frac{1-\Gamma(t)}{\Gamma_1\Gamma(t)}$, such that the sequence of $\mathbb{E}\left[\big\Vert D(t-1)^{1/2}\big(x(t)-x^*\big)\big\Vert\right]$ satisfies
\begin{equation}
\theta\Gamma(t)\leq\mathbb{E}\left[\big\Vert D(t-1)^{1/2}\big(x(t)-x^*\big)\big\Vert\right]<\frac{\theta}{\theta\Gamma_1+1}, \label{eq:condition}
\end{equation}
and decreases with a quadratic rate in expectation in this interval.
\end{theorem}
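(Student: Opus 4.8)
The plan is to reduce the whole statement to the scalar recursion supplied by Lemma \ref{lemma:lemma 4.11}. Writing $e(t) := \mathbb{E}\left[\big\Vert D(t-1)^{1/2}\big(x(t)-x^*\big)\big\Vert\right]$, that lemma gives the deterministic inequality $e(t) \leq \Gamma_1 e(t-1)^2 + \Gamma(t)\,e(t-1)$, so the entire argument can be carried out on the nonnegative sequence $\{e(t)\}$ without revisiting the randomness. First I would translate the iteration bound \eqref{tbound} into the single condition $\Gamma(t) < 1$: since $\Gamma(t) = \Gamma_2\big(1 + C_2(1-\beta)^{(t-2)/4}\big)$ and $\ln(1-\beta) < 0$, taking logarithms in $\Gamma(t)<1$ reproduces \eqref{tbound} exactly. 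This is precisely what makes $\frac{1-\Gamma(t)}{\Gamma_1\Gamma(t)}$ strictly positive, guaranteeing an admissible $\theta \in \big(0, \frac{1-\Gamma(t)}{\Gamma_1\Gamma(t)}\big)$ and a nonempty target interval $\big[\theta\Gamma(t), \frac{\theta}{\theta\Gamma_1+1}\big)$; a short computation shows the two endpoints collapse to the common value $\frac{1-\Gamma(t)}{\Gamma_1}$ exactly when $\theta=\frac{1-\Gamma(t)}{\Gamma_1\Gamma(t)}$, so strict inequality keeps the interval proper.

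The heart of the proof is that the two endpoints encode exactly the two properties we need, and this dictates the construction. The lower endpoint converts the linear term of the recursion into a quadratic one: if $e(t-1) \geq \theta\Gamma(t)$ then $\Gamma(t) \leq \tfrac{1}{\theta}e(t-1)$, hence $\Gamma(t)e(t-1) \leq \tfrac{1}{\theta}e(t-1)^2$ and the recursion collapses to $e(t) \leq \big(\Gamma_1 + \tfrac{1}{\theta}\big)e(t-1)^2$, which is the claimed quadratic rate with constant $\Gamma_1 + \tfrac{1}{\theta}$. The upper endpoint then forces genuine contraction: if $e(t-1) < \frac{\theta}{\theta\Gamma_1+1}$ then $\big(\Gamma_1 + \tfrac{1}{\theta}\big)e(t-1) < 1$, so $e(t) < e(t-1)$ and the iterate keeps moving strictly downward inside the window. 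I would present these two implications in order and conclude that, as long as $e(t)$ stays in $\big[\theta\Gamma(t), \frac{\theta}{\theta\Gamma_1+1}\big)$, it decays quadratically.

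It then remains to argue that the sequence actually reaches the interval and that a consistent $\theta$ exists. Here I would invoke the linear convergence of Lemma \ref{linconvX}, which bounds $e(t)$ by a multiple of $(1-\beta)^{t/2}$ and therefore drives it below the fixed positive threshold $\frac{1-\Gamma(t)}{\Gamma_1}$ (which tends to $\frac{1-\Gamma_2}{\Gamma_1}>0$) once $t$ is past \eqref{tbound}. For such a $t$, solving the two endpoint inequalities for $\theta$ shows that any $\theta$ in the nonempty range $\big(\frac{e(t)}{1-\Gamma_1 e(t)},\, \frac{e(t)}{\Gamma(t)}\big]$ places $e(t)$ inside the interval, and the upper end of this range automatically satisfies $\theta < \frac{1-\Gamma(t)}{\Gamma_1\Gamma(t)}$, closing the existence claim.

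The step I expect to be most delicate is not any single inequality but the interplay between the drift of $\Gamma(t)$ and the simultaneous squeezing of $e(t)$ toward zero. Because the linear rate is continually pushing $e(t)$ below the lower endpoint $\theta\Gamma(t)$, the quadratic phase is a transient window rather than a tail property, and one must verify that there is a genuine range of $t$ in which the lower and upper bounds hold together. The one structural feature working in my favor is that $\Gamma(t)$ is monotonically decreasing toward $\Gamma_2$, so the lower-bound requirement at step $t-1$ already implies the version needed at step $t$; managing this coupling, rather than grinding through the endpoint algebra, is where the real care lies.
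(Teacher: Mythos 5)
Your proposal follows essentially the same route as the paper's proof: translate the iteration bound \eqref{tbound} into $\Gamma(t)<1$ by taking logarithms, use the lower endpoint $\theta\Gamma(t)\leq e(t-1)$ to absorb the linear term of the recursion from Lemma \ref{lemma:lemma 4.11} into a quadratic one with constant $\Gamma_1+\tfrac{1}{\theta}$, use the upper endpoint to guarantee $\frac{\theta\Gamma_1+1}{\theta}e(\bar t)<1$, and invoke the linear convergence of Lemma \ref{linconvX} to ensure the window is reached. Your explicit derivation of the admissible range $\bigl(\frac{e(t)}{1-\Gamma_1 e(t)},\,\frac{e(t)}{\Gamma(t)}\bigr]$ for $\theta$ is in fact slightly more careful than the paper, which only asserts nonemptiness of the neighborhood from linear convergence, but this is a refinement of the same argument rather than a different one.
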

\begin{figure}[thpb]
\vspace{-.1cm}
    \centering
    \includegraphics[width=.45\textwidth]{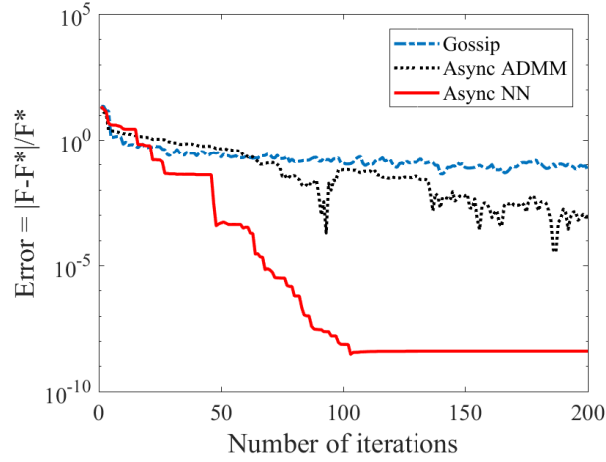}
\caption{Convergence of asynchronous network Newton, asynchronous ADMM and Gossip algorithms in terms of number of iterations}
      \label{fig1}
   \end{figure}
\begin{proof}
We note that in order for the $\theta$ neighborhood to be nonempty, we need $\Gamma(t)<1$. In what follows we show that  for all $t$ satisfying Eq. (\ref{tbound}), $\Gamma(t)<1$. \\Considering the definition of $\Gamma(t)$ from Lemma \ref{lemma:lemma 4.11} , we have
\[\Gamma(t)=\Gamma_2\Big(1+C_2(1-\beta)^{\frac{t-2}{4}}\Big).\]
To have $\Gamma(t)<1$, we need
\[1+C_2(1-\beta)^{\frac{t-2}{4}}<\frac{1}{\Gamma_2},\] therefore, \[(1-\beta)^{\frac{t-2}{4}}<\frac{1-\Gamma_2}{C_2\Gamma_2}.\]
Taking logarithm to the base $1-\beta<1$ from both sides of the above inequality flips the direction of the inequality and results in a lower bound for $t$ as \[t>4\log_{1-\beta}^{\frac{1-\Gamma_2}{C_2\Gamma_2}}+2,\] which is equal to the lower bound in Eq. (\ref{tbound}) by changing the base of the logarithm. We also note that using the definition of $C_2$, the lower bound for $\bar{t}$ will be a function of $F(x(0))-F^*$, which suggests that the further from $F^*$ the algorithm starts, the longer it takes to reach the phase with the quadratic convergence rate.  
We emphasize that since the sequence $\left\lbrace\big\Vert D(t-1)^{1/2}\big(x(t)-x^*\big)\big\Vert\right\rbrace$ converges linearly in expectation, the given neighborhood in Eq. (\ref{eq:condition}) is not empty. We next show that within this interval, the sequence $\left\lbrace \big\Vert D(t-1)^{1/2}\big(x(t)-x^*\big)\big\Vert\right\rbrace$ decreases with a quadratic rate.\\
For analysis simplicity we denote by $[\bar{t},\bar{t}+l]$ the interval in which Eq. (\ref{eq:condition}) is satisfied. Using the result of Lemma \ref{lemma:lemma 4.11} we have \[\mathbb{E}\left[\big\Vert D(\bar{t})^{1/2}\big(x(\bar{t}+1)-x^*\big)\big\Vert\right]\leq\Gamma_1 \left(\mathbb{E}\left[\big\Vert D(\bar{t}-1)^{1/2}\big(x(\bar{t})-x^*\big)\big\Vert\right]\right)^2+\Gamma(t)\mathbb{E}\left[\big\Vert D(\bar{t}-1)^{1/2}\big(x(\bar{t})-x^*\big)\big\Vert\right].\]
We now use the left hand side of Eq. (\ref{eq:condition}) to bound $\Gamma(t)$ and obtain
\[\mathbb{E}\Big[\big\Vert D(\bar{t})^{1/2}\big(x(\bar{t}+1)-x^*\big)\big\Vert\Big]\leq\big(\Gamma_1+\frac{1}{\theta}\big)\Big(\mathbb{E}\Big[\big\Vert D(\bar{t}-1)^{1/2}\big(x(\bar{t})-x^*\big)\big\Vert\Big]\Big)^2.\]
Multiplying both sides of the previous inequality by $\frac{\theta\Gamma_1+1}{\theta}$, we have
\[\frac{\theta\Gamma_1+1}{\theta}\mathbb{E}\Big[\big\Vert D(\bar{t})^{1/2}\big(x(\bar{t}+1)-x^*\big)\big\Vert\Big]\leq\left(\frac{\theta\Gamma_1+1}{\theta}\mathbb{E}\Big[\big\Vert D(\bar{t}-1)^{1/2}\big(x(\bar{t})-x^*\big)\big\Vert\Big]\right)^2.\]
Applying this recursively upto any time $r\in[\bar{t},\bar{t}+l]$, we find that
\begin{equation}
\mathbb{E}\Big[\big\Vert D(r-1)^{1/2}\big(x(r)-x^*\big)\big\Vert\Big]\leq\frac{\theta}{\theta\Gamma_1+1}\Big(\frac{\theta\Gamma_1+1}{\theta}\mathbb{E}\Big[\big\Vert D(\bar{t}-1)^{1/2}\big(x(\bar{t})-x^*\big)\big\Vert\Big]\Big)^{2^{r-\bar{t}}}.\label{eq:quadx}
\end{equation}
We note that the right hand side of Eq. (\ref{eq:condition}) implies that 
$\frac{\theta\Gamma_1+1}{\theta}\mathbb{E}\Big[\big\Vert D(\bar{t}-1)^{1/2}\big(x(\bar{t})-x^*\big)\big\Vert\Big]<1$, hence Eq. (\ref{eq:quadx}) establishes the quadratic convergence rate for all $r\in[\bar{t},\bar{t}+l]$. 
\end{proof} 

\section{Simulation Results}\label{sec:sim}
In this section, we present some numerical studies, where we compare the performance of asynchronous network Newton with asynchronous ADMM and asynchronous Gossip algorithms presented in \cite{we13} and \cite{ra10}. We consider a network of 5 agents each having access to a local objective function of the form $f_{i}(x_{i})=(x_{i}-i)^{2}$ , $i\in\{1,...,5\}$. The underlying network is a complete graph with graph Laplacian matrix $L$. We set the consensus matrix $W$ to be $W=I-\frac{1}{8}L$.
For our asynchronous network Newton we choose $\alpha=1$, stepsize $\varepsilon=0.8$, satisfying the condition in Theorem \ref{thm:conve}. We plot the resulting relative error in objective function value, $\frac{F(x(t))-F^*}{F^*}$, in logarithmic scale in Fig. \ref{fig1} . Asynchronous network Newton is the solid red line, asynchronous Gossip algorithm is the blue dash line and asynchronous ADMM is the black dotted line. We can see clearly that asynchronous network Newton outperforms the other two algorithms, which is expected due to the local quadratic rate. We have also simulated other objective function values and obtained similar results. \\
It's important to note that the optimal value of problems (\ref{optFormulation}) and (\ref{consensusFormulation}) are different. The difference between these two values depends on the penalty coefficient $\frac{1}{\alpha}$. Decreasing $\alpha$ results in a smaller gap between the two optimal values. In this simulation we evaluate the performance of the asynchronous algorithms in reaching their respective optimal values. 
\section{Conclusion}\label{sec:con}
This paper presents an asynchronous distributed network Newton algorithm, which uses matrix splitting techniques to approximate the Hessian inverse and compute Newton step. We also show that the proposed method converges almost surely with in expectation global linear and local quadratic rate of convergence . Simulation results show the convergence speed improvement of the asynchronous network Newton compared to asynchronous ADMM and asynchronous gossip algorithm. Possible future work includes analysis of the convergence properties for a network with dynamic graph, investigating the effect of non-uniform activation of the agents, and extending the convergence rate analysis to the other second order asynchronous methods.
 
\bibliographystyle{plain}
\bibliography{citeNNarxiv}
\end{document}